\newcommand{\ft}{Fourier transform}
\newcommand{\fif}{if and only if}
\newcommand{\tfs}{time-frequency shift}
\newtheorem{tm}{Theorem}
\newtheorem{lemma}[tm]{Lemma}
\newtheorem{cor}[tm]{Corollary}
\newcommand{\rems}{\noindent\textsl{REMARKS:}}
\newcommand{\rem}{\noindent\textsl{REMARK:}}
\newcommand{\beqa}{\begin{eqnarray*}}
\newcommand{\eeqa}{\end{eqnarray*}}
\DeclareMathOperator*{\supp}{supp}
\newcommand{\field}[1]{\mathbb{#1}}
\newcommand{\bR}{\field{R}}        
\newcommand{\bN}{\field{N}}        
\newcommand{\bZ}{\field{Z}}        
\newcommand{\bC}{\field{C}}        
\newcommand{\vf}{\varphi}
 \def\cF{\mathcal{F}}              
 \def\cG{\mathcal{G}}
 \def\cL{\mathcal{L}}
 \def\cM{\mathcal{M}}
 \def\cO{\mathcal{O}}
 \def\cX{\mathcal{X}}
\def\rd{\bR^d}
\def\zd{\bZ^d}
\def\lrd{L^2(\rd)}
\def\intrd{\int_{\rd}}
\def\<{\left<}
\def\>{\right>}
\def\inv{^{-1}}
\def\mv1{M_v^1}
\newcommand{\vs}{\vspace{3 mm}}
\newcommand{\gabsy}{\cG (g,\alpha, \beta )}
\newcommand{\arro}{\Rightarrow}
\newcommand{\rplus}{\bR _+^2}
\begin{document}
\begin{abstract}
Let $g$ be a totally positive function of finite type, i.e.,
$\hat{g}(\xi ) =  \prod_{\nu=1}^{M}(1+2\pi i \delta_\nu \xi )\inv
  $ for $\delta _\nu \in \bR $ and $M\geq 2$. Then the set $\{e^{2\pi i \beta l t} g(t-\alpha k)
  : k,l \in \bZ \}$ is  a frame for $L^2(\bR  )$, \fif\ $\alpha  \beta
  <1$. This result is a first positive
  contribution to a conjecture of I.\ Daubechies from 1990. So far the
  complete characterization of lattice parameters $\alpha, \beta $
  that generate a frame  has been
  known for only six window functions $g$. Our  main result now
  provides an uncountable  class of functions.  As a
  byproduct of the proof method  we derive new sampling theorems in
  shift-invariant spaces and obtain  the correct Nyquist rate.
\end{abstract}

\title{Gabor Frames and Totally Positive Functions}
\author{Karlheinz Gr\"ochenig}
\address{Faculty of Mathematics \\
University of Vienna \\
Nordbergstrasse 15 \\
A-1090 Vienna, Austria}
\email{karlheinz.groechenig@univie.ac.at}
\author{Joachim St\"ockler}
\address{TU Dortmund \\
Vogelpothsweg 87 \\
D-44221 Dortmund }
\email{joachim.stoeckler@math.tu-dortmund.de}

\subjclass[2000]{}
\date{}
\keywords{Gabor frame, totally positive function, Schoenberg-Whitney
  condition, Ron-Shen duality}
\thanks{K.\ G.\ was
  supported in part by the  project P22746-N13  of the
Austrian Science Foundation (FWF)}
\maketitle

\section{Introduction}

The fundamental problem  of Gabor analysis is to determine triples
$(g,\alpha ,\beta )$ consisting of an $L^2$-function $g$ and lattice
parameters $\alpha ,\beta >0$, such that the set of functions
$\cG (g,\alpha,\beta ) = \{e^{2\pi i \beta l t} g(t-\alpha k) : k,l \in \bZ \}$ constitutes a
frame for $L^2(\bR  )$. Thus the fundamental problem is to determine
the set (the \emph{frame set})
\begin{equation}
  \label{eq:7}
  \cF (g) = \{ (\alpha , \beta )\in  \bR _+^2 : \gabsy \,
  \text{ is a frame } \} \, .
\end{equation}

It is stunning how little is known about the nature of the set $\cF
(g)$, even  after twenty years of Gabor analysis.  The famous Janssen
tie~\cite{janssen02b} shows that the set $\cF(g) $ 
can be arbitrarily complicated, even for a
``simple'' function such as  the characteristic function $g= \chi _I$
of an interval.

 Under mild conditions,
precisely, if $g$ is in the Feichtinger algebra $M^1$, then the set
$\cF (g) $ is open in $\bR _+^2$~\cite{FK04}.  Furthermore, if $g\in M^1$,
then $\cF (g)$ contains a neighborhood $U$  of $0$ in $\bR _+^2$. Much
effort has been spent to improve the analytic estimates and make this
neighborhood as large as possible~\cite{BC02a,daubechies90,ron-shen97}. The fundamental density
theorem asserts that $\cF (g) $ is always a  subset of $\{(\alpha,
\beta )\in \rplus : \alpha
\beta \leq 1\}$~\cite{book,daubechies92,heil07}. If $g\in M^1$, then  a subtle version of the
uncertainty principle,  the so-called  Balian-Low theorem,
 states that
$\cF (g) \subseteq \{(\alpha, \beta ):   \alpha
\beta < 1\}$~\cite{BHW95,CP06}.  This means that $\{(\alpha, \beta ):  \alpha
\beta \leq 1\}$ is the maximal set that can occur as a frame set $\cF
(g)$.

Until now, the catalogue of windows $g$ for which $\cF (g)$ is
completely known, consists of the following functions:  if $g$ is either the  Gaussian $g(t)= e^{-\pi
  t^2}$,  the
hyperbolic secant $g(t) =  (e^{t} + e^{-t})\inv $,  the exponential
function $e^{-|t|}$,  then  $\cF (g) = \{
(\alpha ,\beta )\in \rplus : \alpha \beta <1\}$;  if $g$ is the  one-sided
exponential function $g(t) = e^{-t} \chi _{\bR ^+} (t)$,  then $\cF (g) = \{
(\alpha ,\beta )\in \rplus:  \alpha \beta \leq 1\}$.
In addition, the dilates of these functions and their  Fourier
transforms, $g(t) = (1+2\pi i t)\inv $ and $g(t) = (1+4\pi ^2 t^2)\inv
$,  also have the same frame set.
The case of the Gaussian was solved independently by Lyubarski~\cite{lyub92}
and Seip~\cite{seip92} in 1990 with methods from complex analysis in response to a conjecture by
Daubechies and Grossman~\cite{DG88}; the case of the hyperbolic secant can be
reduced to the Gaussian with a trick of Janssen and Strohmer~\cite{JS02},
the case of the  exponential functions is due to  Janssen~\cite{janssen96,janssen02c}.
We note that in all these cases the necessary density condition
$\alpha \beta < 1$ (or $\alpha \beta \leq 1$) is also sufficient for
$\gabsy $ to generate a frame.

The example of the Gaussian lead Daubechies to conjecture that
 $\cF (g) = \{ (\alpha ,\beta )\in \rplus :  \alpha \beta <1\}$
 whenever $g$ is a positive function in $L^1$  with positive  Fourier
 transform in $L^1$~\cite[p.~981]{daubechies90}.  This conjecture was
 disproved in~\cite{janssen96a}. 

Surprisingly, no alternatives to  Daubechies' conjecture  have  been
formulated so far. In this paper we deal with a modification of
Daubechies' conjecture and prove that the frame set of  a  large class
of functions is indeed the maximal set $\cF (g) = \{ (\alpha, \beta
)\in \rplus : \alpha \beta <1\}$. 

This breakthrough is possible by combining ideas from Gabor analysis,
approximation theory and spline theory, and sampling theory.
The main observation is that
all  functions above --- the Gaussian, the hyperbolic secant, and the
 exponential functions  --- are \emph{totally positive
  functions}. This means that     for every two sets of increasing
real numbers $
   x_1<x_2<\cdots<x_N$ and $ y_1<y_2<\cdots<y_N$ , $ N\in \bN $,
the determinant of the matrix $\left[ g(x_j-y_k)\right]_{1\le j,k  \le
  N} $ is non-negative. 

Indeed, for a large class of totally positive functions to be defined
in \eqref{eq:totposI} we will  determine the set $\cF (g)$ completely.
Our main result is the following:

\begin{tm}\label{tmm1}
  Assume that $g\in L^2(\bR)$ is a totally positive function of finite type $\geq
  2$. Then $\cF (g)= \{ (\alpha, \beta ) \in \bR _+^2: \alpha \beta
  <1\}$. In other words,   $\gabsy $ is a
  frame, \fif\ $\alpha \beta <1$.
\end{tm}

This theorem increases the number of functions with known frame set
from six to uncountable. We will see later that the totally positive
functions of finite type can be parametrized by a countable number of
real parameters, see~\eqref{eq:totposI}.
Among the examples of totally positive functions of finite type  are the
two-sided exponential $e^{-|t|}$ (already known), the truncated power functions $g(t)
= e^{-t} t^r \chi _{\bR _+}$ for $r\in \bN $, the function $g(t) = (e^{-at} - e^{-bt})
\chi _{\bR ^+}(t)$ for $a,b>0$, or the asymmetric exponential $g(t) =
e^{at} \chi _{\bR ^+}(-t) +  e^{-bt} \chi _{\bR ^+}(t)$, and the
convolutions of totally positive functions of finite type. In addition
the class of $g$ such that $\cF (g)= \{ (\alpha, \beta ) \in \bR _+^2: \alpha \beta
  <1\}$   is invariant with respect to  dilation, \tfs s, and the \ft
  .  Since $\cG (g, \alpha ,
\beta ) = \cG (\hat{g}, \beta , \alpha )$, we obtain a complete
description of the frame set of the Fourier transforms of totally
positive functions. For instance, if $g(t) = (1+4\pi ^2 t^2)^{-n}$ for
$n\in \bN $, then $\cF ({g})= \{ (\alpha, \beta ) \in \bR _+^2: \alpha \beta
  <1\}$.

To compare with Daubechies' original conjecture, we note that every
totally positive and even function possesses a  positive Fourier
transform. Theorem~\ref{tmm1} yields a large class of functions for
which Daubechies' conjecture is indeed true. Furthermore,
Theorem~\ref{tmm1} suggests the modified  conjecture that the frame
set of every continuous totally positive function is $\cF ({g})= \{
(\alpha, \beta ) \in \bR _+^2: \alpha \beta   <1\}$.

Our main tool is a  generalization of the total positivity to
infinite matrices. We will show that an infinite matrix
of the form $\left[ g(x_j-y_k)\right]_{j,k \in
\bZ } $  possesses a left-inverse, when   $g$ is totally positive  and
some natural conditions hold for  the sequences
$(x_j)$ and $(y_k)$   (Theorem~\ref{main0}).

The analysis of Gabor frames and the ideas developed in the proof of
Theorem~\ref{tmm1}   lead to a surprising progress on  another open problem, namely
(nonuniform) sampling  in shift-invariant spaces. Fix a generator $g\in L^2(\bR
)$, a step-size $h>0$,   and consider the subspace of $L^2(\bR )$ defined by
$$
V_h(g) = \{ f \in L^2(\bR ): f= \sum _{k\in \bZ } c_k g(.-kh) \} \, .
$$
For the case $h=1$ we write $V(g)$, for short.
We  assume that the translates $g( . - k)$, $k\in \bZ $, form a Riesz
basis for $V(g)$ so that $\|f \|_2 \asymp \|c\|_2$.
Shift-invariant spaces are used as an attractive substitute of
bandlimited
functions in signal processing to model ``almost'' bandlimited
functions. See the survey~\cite{AG01} for an introduction to sampling
in shift-invariant spaces.  An important  problem that is  related to
the analog-digital conversion in signal processing  is the derivation of
sampling theorems for the space $V(g)$. We say that a set of sampling points $x_j $, ordered
linearly as $x_j < x_{j+1}$, is a set of sampling for $V_h(g)$, if there
exist constants $A,B>0$, such that
$$
A \|f\|_2^2 \leq \sum _{j\in \bZ } |f(x_j)|^2 \leq B \|f\|_2^2  \qquad
\text{ for all } f\in V_h(g) \, .
$$

As in the case of Gabor frames there are many qualitative sampling
theorems for shift-invariant spaces. Typical results  require high  oversampling rates.
They state that there exists  a $\delta >0 $
depending on $g$ such that every set with maximum gap $\sup _j
(x_{j+1} - x_j) = \delta $ is a set of sampling for $V(g)$\cite{AF98,AG01,Li07}. In most
cases $\delta $ is either not specified or too small to be of
practical use. The expected  result is that for $V(g)$  there exists a
Nyquist rate and that $\delta <1$ is sufficient. And yet,   the
only  generators for which the sharp   result
is known are the $B$-splines $b_n = \chi _{[0,1]} \ast \dots \ast \chi
_{[0,1]}$ ($n+1$-times). If the maximum gap $\sup _j
(x_{j+1} - x_j) = \delta $ satisfies $\delta <1$, then $\{ x_j\}$ is a
set of sampling for $V(b_n)$~\cite{AG00}.   (This
optimal result can also be proved for a generalization of splines, the
so-called ``ripplets''\cite{CGM}). It has been an open problem to
identify further classes of shift-invariant spaces for which the optimal
sampling results hold.

Here we will prove a similar  result for totally positive
generators.

\begin{tm}
  Let $g$ be a totally positive function of finite type and $\cX = \{x_j\}$
  be a  set with maximum gap $\sup _j
(x_{j+1} - x_j) = \delta $. If $\delta <1$, then $\cX $ is a set of
sampling for $V(g)$.
\end{tm}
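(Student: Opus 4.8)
The plan is to reduce the two sampling inequalities to a single spectral statement about a bi-infinite matrix and then to invoke the left-invertibility result of Theorem~\ref{main0}. Write an arbitrary $f\in V(g)$ as $f=\sum_{k\in\bZ}c_k\,g(\cdot-k)$; since the translates $g(\cdot-k)$ form a Riesz basis of $V(g)$, we have $\norm{f}_2\asymp\norm{c}_{\ell^2}$. Sampling on $\cX=\{x_j\}$ gives
$$ f(x_j)=\sum_{k\in\bZ}c_k\,g(x_j-k)=(U_{\cX}\,c)_j,\qquad U_{\cX}:=\big[g(x_j-k)\big]_{j,k\in\bZ}. $$
Hence $\sum_j\abs{f(x_j)}^2=\norm{U_{\cX}c}_{\ell^2}^2$, and after substituting the Riesz bounds the defining inequalities $A\norm{f}_2^2\le\sum_j\abs{f(x_j)}^2\le B\norm{f}_2^2$ become equivalent to
$$ A'\,\norm{c}_{\ell^2}^2\le\norm{U_{\cX}c}_{\ell^2}^2\le B'\,\norm{c}_{\ell^2}^2\qquad(c\in\ell^2(\bZ)). $$
So it suffices to show that $U_{\cX}$ is bounded on $\ell^2(\bZ)$ and bounded below, i.e.\ that it has a bounded left-inverse.

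The lower bound is the heart of the matter and is exactly where total positivity enters. I would apply Theorem~\ref{main0} with nodes $y_k=k$ and sampling points $x_j$: since $g$ is totally positive of finite type, the only thing to verify is the ``natural conditions'' on the two increasing sequences $(x_j)$ and $(k)$. The role of $\delta<1$ is to supply the required density and interlacing, since $x_{j+1}-x_j<1$ for all $j$ forces every interval of length $1$ to contain at least one $x_j$. This is precisely the Schoenberg--Whitney condition which, together with the total positivity of $g$, keeps the finite sections $\det\big[g(x_{j_i}-k_i)\big]$ uniformly bounded away from zero and produces a uniformly bounded left-inverse $L$ of $U_{\cX}$. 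Then $c=L\,U_{\cX}c$ yields $\norm{c}_{\ell^2}\le\norm{L}\,\norm{U_{\cX}c}_{\ell^2}$, i.e.\ the lower sampling bound with $A'=\norm{L}^{-2}$.

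For the upper bound I would exploit that a totally positive function of finite type has rational Fourier transform $\hat g(\xi)=\prod_\nu(1+2\pi i\delta_\nu\xi)\inv$ with no poles on the real axis, so $g$ decays exponentially. Assuming $\cX$ is relatively separated---a hypothesis that is genuinely needed here, since clustering sampling points destroy the upper bound---Schur's test applied to $U_{\cX}$, controlling $\sup_k\sum_j\abs{g(x_j-k)}$ and $\sup_j\sum_k\abs{g(x_j-k)}$ by the geometric tails of $g$, gives boundedness of $U_{\cX}$ on $\ell^2(\bZ)$ and hence the constant $B'$.

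The main obstacle is the lower bound: verifying the hypotheses of Theorem~\ref{main0} and, above all, extracting a bound that is \emph{uniform} over all admissible configurations $\cX$. The delicate point is that $\delta<1$ must be shown to imply not merely a density statement but the full Schoenberg--Whitney interlacing that keeps the relevant minors bounded below; this is what makes $\delta<1$ the sharp Nyquist threshold, matching the one degree of freedom per unit length carried by $V(g)$. Once Theorem~\ref{main0} is in place, the remaining reduction above is routine.
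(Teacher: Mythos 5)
Your opening reduction---writing $f=\sum_k c_k g(\cdot-k)$, using the Riesz basis property to trade $\|f\|_2$ for $\|c\|_2$, and recasting the sampling inequalities as bounded left-invertibility of the pre-Gramian $[g(x_j-k)]_{j,k\in\bZ}$---is exactly the paper's Lemma~\ref{chartwo}, and your verification that $\delta<1$ forces every interval $(k,k+1)$ to contain a sampling point is fine. The genuine gap is in the step you yourself call the heart of the matter. Theorem~\ref{main0} produces only an \emph{algebraic} left-inverse with the support property \eqref{eq:ch24}; its conclusion contains no norm estimate, so it cannot ``produce a uniformly bounded left-inverse $L$'' as you assert. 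Worse, the claim that the Schoenberg--Whitney conditions keep the minors $\det\left[g(x_{j_i}-k_i)\right]$ \emph{uniformly} bounded away from zero is false under the max-gap hypothesis alone: Theorem~\ref{strictpos} gives strict positivity of each individual determinant, but $\sup_j(x_{j+1}-x_j)=\delta<1$ still allows sampling points to cluster arbitrarily close to one another and to sit arbitrarily close to the nodes $y_k=k$, and in either degeneration the finite sections used in the construction become nearly singular, so the rows of the algebraic left-inverse blow up. Nothing in your proposal excludes this, and no choice of $r$ fixes it.

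This is precisely why the paper does not apply Theorem~\ref{main0} directly but proves a separate result, Theorem~\ref{sampsi}. There one first \emph{selects} from $\cX$ a quasi-uniform (separated) subsequence $X'$ whose points stay at distance at least $\epsilon$ from all nodes (condition $C_r(\epsilon)$; possible here with $\epsilon$ of the order of $1-\delta$), reruns the construction of Theorem~\ref{main0} using only points of $X'$, which yields the strengthened interlacing \eqref{eq:Xksample}, and then runs a compactness argument: every configuration $\bigl((\xi_1-\eta_1,\ldots,\xi_N-\eta_1),(0,\eta_2-\eta_1,\ldots,\eta_N-\eta_1)\bigr)$ arising in the construction lies in a compact set $K\subset S_q\times S_q$ (compact exactly because of the separation $q>0$ and the $\epsilon$-buffer), the determinant is continuous and positive on $K$, hence $\det P_k\ge\delta_0>0$ uniformly in $k$; Cramer's rule then bounds the entries of $P_k^{-1}$, and the banded Schur test of Lemma~\ref{lem:matrix} gives $\|\Gamma\|_{\ell^2\to\ell^2}<\infty$. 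The subsequence selection and the compactness argument are the actual mathematical content of the proof, and both are absent from your proposal. (Your side remark that the upper sampling bound needs relative separation of $\cX$ is a fair observation---one the paper itself glosses over in Lemma~\ref{chartwo}---but it does not repair the lower bound, which is where your argument fails.)
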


The theorem will be a corollary of a  much more general
sampling theorem.

The paper is organized as follows: In Section~2 we discuss the tool
box for the Gabor frame problem. We will review some known  characterizations
of Gabor frames and derive some new criteria  that are more
suitable for our purpose. We then recall the main statements about
totally positive functions and prove the main technical theorem about
the existence of a left-inverse of the pre-Gramian  matrix. In
Section~3 we study Gabor frames and discuss some  open
problems that are raised by our new results.  In Section~4 we prove the sampling theorem.

\section{Tools}

\subsection{Characterizations of Gabor Frames}

There are many results about the structure of Gabor frames and
numerous characterizations of Gabor frames. In principle, one has to
check that one of the equivalent conditions for a set
$\gabsy $ to be  a frame is satisfied.  This task is almost always
difficult because it amounts to proving the invertibility of an
operator or a family of operators on an infinite-dimensional Hilbert
space.

In the following we summarize the most important characterizations of
Gabor frames. These are valid in arbitrary dimension $d$ and for
rectangular lattices $\alpha \zd \times \beta \zd $. We will  use the
notation $M_\xi f=e^{2\pi i \xi\cdot }f$ and
$T_yf=f(\cdot -y)$, $\xi,t \in \rd $,  such that
$$
   \gabsy =\{ M_{l\beta}T_{k\alpha}g : k,l\in \zd\}\, .
$$
Then $\gabsy$ is a frame of $L^2(\rd)$, if 
 there exist constants $A,B>0$, such that
$$
A\|f\|_2^2 \leq \sum _{k,l \in \bZ } |\langle f,
M_{l\beta}T_{k\alpha}g \rangle |^2 \leq B \|f\|_2^2 \qquad \text{for
  all } f\in L^2(\rd ) \, .
$$
If only the right-hand inequality is satisfied, then $\gabsy $ is called a
Bessel  sequence.

Following the fundamental work of Ron and Shen~\cite{ron-shen97} on
shift-invariant systems and Gabor frames, we define two
families of infinite matrices associated to a given  window function
$g\in \lrd $ and two lattice parameters $\alpha, \beta >0$.

The pre-Gramian matrix $P=P(x)$ is defined by the entries
\begin{equation}
  \label{eq:1}
  P(x)_{jk} = g(x+j\alpha - \tfrac{k}{\beta}),  \qquad j,k \in \zd \, .
\end{equation}
The Ron-Shen matrix is $G(x) = P(x)^* P(x)$ with the  entries
\begin{equation}
  \label{eq:2}
  G(x)_{kl} = \sum _{j\in \zd } g(x+j\alpha - \tfrac{l}{\beta}) \,
  \bar{g}(x+j\alpha - \tfrac{k}{\beta}),  \qquad k,l \in \zd \, .
\end{equation}

\begin{tm}[Characterizations of Gabor frames] \label{charone}
  Let $g\in \lrd$ and $\alpha , \beta >0$. Then the following
  conditions are  equivalent:

(i) The set $\gabsy $ is a frame for $\lrd $.

(ii) There exist $A,B >0$ such that the spectrum of almost every
Ron-Shen matrix $G(x)$ is contained in the interval $[A,B]$:
$$
\sigma (G(x)) \subseteq [A,B] \qquad \text{a.a. } x\in \rd \, .
$$

(iii) There exist $A,B >0$ such that
\begin{equation}
  \label{eq:4}
A\|c\|_2^2 \leq \sum _{j\in \zd } \big| \sum _{k\in \zd } c_k
g(x+j\alpha - \tfrac{k}{\beta})\big|^2 \leq B \|c\|_2^2  \quad
\text{a.a.} \,\, x\in \rd, c\in \ell ^2(\zd ) \, .
\end{equation}

(iv) There exists a so-called dual window $\gamma $, such that $\cG
(\gamma , \alpha , \beta )$ is a Bessel sequence and $\gamma $
satisfies the biorthogonality condition
\begin{equation}
  \label{eq:3}
\langle \gamma , M_{l/\alpha } T_{k/\beta } g\rangle = (\alpha \beta
)^d \delta _{k,0}\delta _{l,0}, \qquad \forall k,l \in \zd   \, .
\end{equation}
\end{tm}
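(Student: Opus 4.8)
The plan is to reduce the global, infinite-dimensional frame inequalities to a pointwise family of conditions by \emph{fiberizing} the Gabor system over a fundamental domain of the time lattice $\beta^{-1}\zd$. Write $Q=[0,\beta^{-1})^d$ and let $U$ be the isometry
$$U\colon\lrd\to L^2(Q;\ell^2(\zd)),\qquad (Uf)(x)=\bigl(f(x-k/\beta)\bigr)_{k\in\zd},$$
which is unitary because $\rd$ is tiled by the translates $Q+\beta^{-1}k$. First I would establish the key identity
$$\sum_{k,l\in\zd}\bigl|\langle f,M_{l\beta}T_{k\alpha}g\rangle\bigr|^2=\beta^{-d}\int_Q\bigl\|P(x)\,(Uf)(x)\bigr\|_{\ell^2}^2\,dx,$$
valid as an identity in $[0,\infty]$ for every $f\in\lrd$. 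Its proof is a single application of Plancherel for Fourier series in the \emph{modulation} index $l$: for fixed $k$ the numbers $\langle f,M_{l\beta}T_{k\alpha}g\rangle$ are the Fourier coefficients of the $\beta^{-1}$-periodization of $t\mapsto f(t)\overline{g(t-k\alpha)}$, and summing the resulting Parseval identities over $k$ produces exactly the quadratic form $\|P(x)c\|^2=\langle G(x)c,c\rangle$ with $c=(Uf)(x)$, after a harmless reindexing $j\mapsto-j$ and a complex conjugation that identify the resulting matrix with the $P(x)$ of \eqref{eq:1} and absorb $\beta^{-d}$ into the constants.

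Granting this identity, the equivalence (i)$\Leftrightarrow$(iii) is immediate: since $U$ is onto, $(Uf)(x)$ ranges over \emph{all} of $L^2(Q;\ell^2(\zd))$, so the global bound $A\|f\|_2^2\le\sum_{k,l}|\langle f,M_{l\beta}T_{k\alpha}g\rangle|^2\le B\|f\|_2^2$ is equivalent to $\beta^d A\int_Q\|c(x)\|^2\le\int_Q\|P(x)c(x)\|^2\le\beta^d B\int_Q\|c(x)\|^2$ for all $c\in L^2(Q;\ell^2)$, which by localizing $c$ to small sets is in turn equivalent to the pointwise estimate $\beta^d A\|c\|^2\le\|P(x)c\|^2\le\beta^d B\|c\|^2$ for a.e.\ $x$ and all $c\in\ell^2(\zd)$ --- that is \eqref{eq:4}. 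Finally (ii)$\Leftrightarrow$(iii) is a restatement: $\|P(x)c\|^2=\langle G(x)c,c\rangle$, and for the bounded positive operator $G(x)=P(x)^*P(x)$ the quadratic-form bounds $A\le G(x)\le B$ coincide with $\sigma(G(x))\subseteq[A,B]$.

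For (i)$\Leftrightarrow$(iv) I would invoke the duality between the lattice $\alpha\zd\times\beta\zd$ and the adjoint lattice $\beta^{-1}\zd\times\alpha^{-1}\zd$. If $\gabsy$ is a frame, its frame operator $S$ commutes with every $M_{l\beta}T_{k\alpha}$, so the canonical dual window $\gamma=S^{-1}g$ generates the dual frame $\cG(\gamma,\alpha,\beta)=\{M_{l\beta}T_{k\alpha}\gamma\}$, which is in particular Bessel; the biorthogonality \eqref{eq:3} is then the Wexler--Raz relation, read off from the Janssen representation $S_{g,\gamma}=(\alpha\beta)^{-d}\sum_{k,l}\langle\gamma,M_{l/\alpha}T_{k/\beta}g\rangle\,M_{l/\alpha}T_{k/\beta}$ by comparing it with $S_{g,\gamma}=\Id$. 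Conversely, given a Bessel window $\gamma$ satisfying \eqref{eq:3}, the Janssen representation turns \eqref{eq:3} into the reconstruction identity $f=\sum_{k,l}\langle f,M_{l\beta}T_{k\alpha}\gamma\rangle\,M_{l\beta}T_{k\alpha}g$, and a Cauchy--Schwarz estimate against the Bessel bound of $\gamma$ yields the lower frame bound for $g$; the upper (Bessel) bound for $g$ is the more delicate point and is supplied by the duality machinery tying the two frame bounds of $\gabsy$ to the Riesz bounds of the adjoint system $\cG(g,\beta^{-1},\alpha^{-1})$.

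I expect the main obstacle to be the rigor of the fiberization rather than any single computation: justifying the term-by-term Plancherel identity for an arbitrary $L^2$ window (where both sides may be infinite and $P(x)$ need not even be a bounded operator before the frame hypothesis is imposed), and especially the localization/measurable-selection step that upgrades an a.e.-in-$x$ statement holding at the particular fibers $(Uf)(x)$ to one holding a.e.\ in $x$ for \emph{every} $c\in\ell^2(\zd)$ simultaneously. The parallel difficulty in (iv) is the convergence of the Janssen representation for merely Bessel windows, which is precisely what makes the passage between \eqref{eq:3} and the reconstruction identity --- and the recovery of the upper bound for $g$ --- delicate.
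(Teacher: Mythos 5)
The paper itself offers no proof of Theorem~\ref{charone}: it is presented as a compilation of known results (following Ron--Shen and Wexler--Raz/Janssen), and only the one-line identity $\|P(x)c\|_2^2=\langle G(x)c,c\rangle$ behind (ii)$\Leftrightarrow$(iii) is recorded in a remark. Measured against the standard proofs, your fiberization argument for (i)$\Leftrightarrow$(iii)$\Leftrightarrow$(ii) is sound and is exactly the Ron--Shen route: the unitary $U$, the Parseval computation in the modulation index, and the localization step (indicator-supported fibers, a countable dense set of sequences $c$, extension by continuity) are the right ingredients, and the technical caveats you flag can all be discharged. The direction (i)$\Rightarrow$(iv), via the canonical dual window $\gamma=S^{-1}g$ and the Wexler--Raz relations, is also standard and correct in outline.

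The genuine gap is in (iv)$\Rightarrow$(i), precisely at the point you defer to ``duality machinery'': the Bessel bound for $g$. First, the Janssen representation / fundamental identity you use to turn \eqref{eq:3} into the reconstruction formula is only justified when \emph{both} windows generate Bessel sequences, so invoking it here is circular --- the Bessel property of $g$ is what is at stake. Second, no duality argument can close the gap, because duality converts \eqref{eq:3} plus the Bessel property of $\gamma$ into the \emph{lower} Riesz bound for the adjoint system $\cG(g,\beta^{-1},\alpha^{-1})$, i.e.\ precisely the lower frame bound your Cauchy--Schwarz step already gives; the upper Riesz bound is equivalent to the Bessel property of $\gabsy$ itself and simply does not follow. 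Indeed, with only $\cG(\gamma,\alpha,\beta)$ Bessel the implication is false. Take $d=1$, $\alpha=1$, $\beta=\tfrac12$, let $(b_n)$ and $(a_n)$ be the Fourier coefficients of $\hat b=\chi_{[0,1/2)}$ and of $\hat a=\chi_{[0,1/2)}+u\,\chi_{[1/2,1)}$, where $u\in L^2([\tfrac12,1))$ is unbounded, and set $g=\sum_n a_n\chi_{[n,n+1)}$, $\gamma=\sum_n b_n\chi_{[n,n+1)}$. A direct computation gives $\langle\gamma,M_{l}T_{2k}g\rangle=\hat b\,\overline{\hat a}$-correlations that collapse to $\tfrac12\,\delta_{k,0}\delta_{l,0}$, so \eqref{eq:3} holds; $\cG(\gamma,1,\tfrac12)$ is Bessel (its pre-Gramian fibers have operator norm $2^{-1/2}$, uniformly in $x$); yet $\cG(g,1,\tfrac12)$ is not Bessel --- already the $l=0$ part of the frame sum, tested on step functions $f=\sum_m c_m\chi_{[m,m+1)}$, equals $\int_0^1|\hat c(\theta)|^2|\hat a(\theta)|^2\,d\theta$, which is unbounded relative to $\|c\|_2^2$ --- hence $\cG(g,1,\tfrac12)$ is not a frame, even though the lower frame inequality does hold. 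So your proof of (iv)$\Rightarrow$(i) cannot be completed as written: condition (iv) must be strengthened by the hypothesis that $\gabsy$ is itself a Bessel sequence (this is how the Wexler--Raz theorem is stated in the literature, and it costs nothing in this paper's applications, where $g$ is continuous with exponential decay); with that extra hypothesis, your Cauchy--Schwarz argument together with the then-legitimate Janssen representation does finish the proof.
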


\rems\ 1. Condition (iii) is only  a simple  reformulation of (ii), because
\eqref{eq:4} in different notation is just
$$
\|P(x)c\|_2^2 = \langle P(x)c, P(x)c\rangle = \langle G(x) c, c
\rangle \asymp \|c\|_2^2 \, .
$$

2. Condition (iii) as stated has been used  successfully by
Janssen~\cite{janssen96,janssen02c}.   The construction of a
dual window is underlying the proofs that $\gabsy $ is a frame for the
Gaussian $g(t) = e^{-\pi t^2}$ in~\cite{GL09} and for the one-sided
exponential $g(t) = e^{-t}\chi _{\bR ^+}(t)$~\cite{janssen96}.

3. Condition (iii) establishes a fundamental link between Gabor
analysis and the theory of  sampling in
shift-invariant spaces. It  says that the set  $x+\alpha \bZ $
is a set of sampling for the  shift-invariant space $V_h(g)$ with
generator $g$ and step-size $h=1/\beta$.
Thus $\gabsy $ is a frame for $\lrd $, \fif\ each set  $x+\alpha
\bZ $  is a set of sampling for $V(g)$ \emph{with uniform constants
independent of $x\in \rd $}.

In our analysis we will use another reformulation of condition (iii).

\begin{lemma} \label{l:pre}
  Let $g\in \lrd$ and $\alpha , \beta >0$. Then the following
  conditions are  equivalent:

(i) The set $\gabsy $ is a frame for $\lrd $.

(v) The set  of the pre-Gramians $\{P(x) \}$ is uniformly bounded on
$\ell ^2(\zd )$,  and  possess a uniformly bounded set of  left-inverses, i.e.,
there exist  matrices  $\Gamma (x), x\in \rd ,$ such
that
\begin{align}
  \Gamma (x) P(x) = I \qquad \text{ a.a. } x \,\, \in \rd \, ,\\
\|\Gamma (x) \|_{op} \leq C \qquad \text{ a.a. } x  \,\, \in \rd \, .
\end{align}

In this case,  the function $\gamma$ defined by $\gamma (x+\alpha j)
=  \beta  ^{d} \bar{\Gamma} _{0,j}  (x)$, where $x\in [0,\alpha)^d$ and $j\in\zd$,
or
equivalently
\begin{equation}
  \label{eq:9}
  \gamma (x) =  \beta ^{d} \sum _{j\in \zd } \bar{\Gamma} _{0,j}(x) \chi _{[0,\alpha
    )^d}(x-\alpha j) \, ,\quad x\in\rd,
\end{equation}
 satisfies the biorthogonality condition \eqref{eq:3}.
\end{lemma}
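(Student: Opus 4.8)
The plan is to use Theorem~\ref{charone}, specifically its condition (iii), as the bridge. As noted in Remark~1 following that theorem, \eqref{eq:4} is nothing but the statement
\[
A\norm{c}_2^2 \;\le\; \norm{P(x)c}_2^2 \;\le\; B\norm{c}_2^2 \qquad \text{a.a. } x\in\rd,\ c\in\ell ^2(\zd ),
\]
i.e.\ that the family $\{P(x)\}$ is uniformly bounded above and \emph{uniformly bounded below}. Thus the whole equivalence (i)$\Leftrightarrow$(v) reduces to the elementary Hilbert-space fact that a bounded operator is bounded below \fif\ it admits a bounded left-inverse, carried out uniformly in $x$.

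For (i)$\Rightarrow$(v): the right estimate in \eqref{eq:4} gives $\norm{P(x)}_{op}\le B^{1/2}$ a.e., the asserted uniform boundedness. The left estimate makes each $P(x)$ injective with closed range and forces $\sigma(G(x))\subseteq[A,B]$ (this is condition (ii)), so $G(x)=P(x)^*P(x)$ is invertible with $\norm{G(x)\inv}_{op}\le A\inv$. I would then take the Moore--Penrose left-inverse $\Gamma(x):=G(x)\inv P(x)^*$, for which $\Gamma(x)P(x)=G(x)\inv P(x)^*P(x)=I$ and $\norm{\Gamma(x)}_{op}^2=\norm{\Gamma(x)\Gamma(x)^*}_{op}=\norm{G(x)\inv P(x)^*P(x)G(x)\inv}_{op}=\norm{G(x)\inv}_{op}\le A\inv$, uniformly in $x$. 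Conversely, for (v)$\Rightarrow$(i): if $\Gamma(x)P(x)=I$ with $\norm{\Gamma(x)}_{op}\le C$, then $\norm{c}_2=\norm{\Gamma(x)P(x)c}_2\le C\norm{P(x)c}_2$ supplies the lower bound $A=C^{-2}$ in \eqref{eq:4}, while the assumed uniform boundedness of $\{P(x)\}$ gives the upper bound; so (iii) holds and (i) follows from Theorem~\ref{charone}.

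It remains to check that $\gamma$ from \eqref{eq:9} satisfies the biorthogonality \eqref{eq:3}. First, $\gamma\in\lrd$: since $\sum_{j\in\zd}\abs{\Gamma_{0,j}(x)}^2=\norm{\Gamma(x)^*e_0}_2^2\le C^2$, we get $\int_{[0,\alpha)^d}\sum_{j}\abs{\gamma(x+\alpha j)}^2\,dx=\beta^{2d}\int_{[0,\alpha)^d}\sum_j\abs{\Gamma_{0,j}(x)}^2\,dx\le \alpha^d\beta^{2d}C^2<\infty$. To evaluate the inner product I would tile $\rd$ by the cubes $\alpha j+[0,\alpha)^d$ and use $e^{-2\pi i (l/\alpha)\cdot\alpha j}=1$:
\begin{align*}
\< \gamma, M_{l/\alpha}T_{k/\beta}g\>
&= \int_{\rd} \gamma(t)\, e^{-2\pi i (l/\alpha)\cdot t}\,\overline{g(t-\tfrac{k}{\beta})}\,dt \\
&= \int_{[0,\alpha)^d} e^{-2\pi i (l/\alpha)\cdot x}\sum_{j\in\zd}\gamma(x+\alpha j)\,\overline{g(x+\alpha j - \tfrac{k}{\beta})}\,dx .
\end{align*}
By \eqref{eq:1}, $\overline{g(x+\alpha j-\frac{k}{\beta})}=\overline{P(x)_{jk}}$, and by \eqref{eq:9}, $\gamma(x+\alpha j)=\beta^d\overline{\Gamma_{0,j}(x)}$, so the inner sum collapses to $\beta^d\,\overline{\sum_j\Gamma_{0,j}(x)P(x)_{jk}}=\beta^d\,\overline{(\Gamma(x)P(x))_{0k}}=\beta^d\delta_{0,k}$, independent of $x$. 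The surviving integral $\int_{[0,\alpha)^d}e^{-2\pi i (l/\alpha)\cdot x}\,dx=\alpha^d\delta_{l,0}$ then yields $\< \gamma, M_{l/\alpha}T_{k/\beta}g\>=(\alpha\beta)^d\delta_{k,0}\delta_{l,0}$, which is \eqref{eq:3}.

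The only real obstacle is the legitimacy of the two interchanges above: decomposing the integral over $\rd$ into the lattice sum, and pulling the sum through the integral. Both are justified by absolute convergence, since for a.e.\ $x$ both $(\gamma(x+\alpha j))_j$ and $(g(x+\alpha j-\frac{k}{\beta}))_j$ lie in $\ell ^2(\zd )$ --- the former by the $L^2$-bound just established, the latter because it is a column of the bounded matrix $P(x)$ --- so Cauchy--Schwarz controls the inner sum and Fubini applies on each cube. I therefore expect the verification of \eqref{eq:3} together with the $L^2$-membership of $\gamma$ to be the crux; the equivalence (i)$\Leftrightarrow$(v) itself is immediate once condition (iii) of Theorem~\ref{charone} is read as a two-sided bound on $P(x)$.
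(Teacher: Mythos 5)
Your proposal is correct and takes essentially the same route as the paper: both arguments pass through the two-sided bound of Theorem~\ref{charone}\,(ii)/(iii), construct the left-inverse as $\Gamma (x) = G(x)\inv P(x)^*$, prove the converse via $\norm{c}_2 \le \norm{\Gamma (x)}_{op}\norm{P(x)c}_2$, and verify \eqref{eq:3} by the identical periodization computation \eqref{newdual}. The only differences are cosmetic: your $C^*$-identity yields the slightly sharper bound $\norm{\Gamma (x)}_{op}\le A^{-1/2}$ where the paper settles for $A\inv B^{1/2}$, and you make explicit the Fubini/Cauchy--Schwarz justification for interchanging sum and integral, which the paper leaves implicit.
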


\begin{proof}

  $\mathbf{(i) \arro (v)}$. If $\gabsy $ is a frame, then by
  Theorem~\ref{charone}(ii)
$$ \sum _{j\in \zd } \big| \sum _{k\in \zd } c_k
g(x+j\alpha - \tfrac{k}{\beta})\big|^2 = \langle P(x) c , P(x)c\rangle
= \langle G(x) c, c\rangle  \asymp \|c\|_2^2 \, , $$
with bounds independent of $x$.
Consequently  $G(x)$ is bounded and invertible on $\ell^2(\zd)$.
Therefore   the operators $P(x)$ are uniformly  bounded on $\ell
^2(\zd  )$,  
 and we can define
$\Gamma (x) = G(x)\inv P^*(x)$. Then
$$\Gamma (x) P(x) =
\big((P^*(x)P(x))\inv P^*(x)\big) P(x) = \mathrm{Id},$$
 and
$$\|\Gamma (x)\|_{op} \leq \|G(x)\inv \|_{op} \|P(x)\|_{op} \leq A\inv B^{1/2}.$$

  $\mathbf{(v) \arro (ii)}$. Conversely, if $P(x)$ possesses a bounded left-inverse
  $\Gamma (x)$, then
$$
\|c\|_2^2 = \|\Gamma (x) P(x) c\|_2^2 \leq  \|\Gamma (x) \|_{op}^2
\|P(x) c\|_2^2 \le  C^2 \langle G(x) c , c \rangle \leq C^2
\|P (x) \|_{op}^2 \|c\|_2^2 \, ,
$$
and this implies condition  $\mathbf{(ii)}$ of Theorem~\ref{charone}.

  We next  verify that $\gamma$ as  defined
   in \eqref{eq:9}  satisfies the
  biorthogonality condition \eqref{eq:3}:
  \begin{align}
    \langle \gamma , M_{l/\alpha } T_{k/\beta } g\rangle &= \intrd
    \gamma (x) \bar{g}(x-k/\beta ) \, e^{-2\pi i l\cdot x/\alpha } \,
    dx \notag \\
&= \int _{[0,\alpha ]^d} \sum _{j\in \zd } \gamma  (x+\alpha j)
\bar{g}(x+\alpha j - k/\beta ) \,  \, e^{-2\pi i l\cdot x/\alpha } \,
    dx \notag \\
&= \beta ^{d} \int _{[0,\alpha ]^d} \sum _{j\in \zd } \bar{\Gamma} _{0,j}  (x)
\bar{g}(x+\alpha j - k/\beta ) \,  \, e^{-2\pi i l\cdot x/\alpha } \,
    dx \label{newdual} \\
&=  \beta ^{d} \int _{[0,\alpha ]^d} \delta _{k,0} e^{-2\pi i l\cdot x/\alpha } \,
    dx = (\alpha\beta )^d \delta _{k,0} \delta _{l,0} \, . \notag
  \end{align}
\end{proof}

The  function $\gamma$ in \eqref{eq:9} is a dual window of $g$, as
defined in
condition (iv) of Theorem~\ref{charone}, provided
that  $\mathcal{G}(\gamma,\alpha,\beta)$ is a Bessel sequence.
The following result gives a sufficient condition.

\begin{lemma}
  \label{l:pre1}
Assume that there exists a (Lebesgue measurable)  vector-valued function
$\sigma (x)$ from $\rd$ to $\ell ^2(\zd )$ with period $\alpha $, such that
\begin{equation}
  \label{eq:c6}
  \sum _{j\in \zd } \sigma _j(x) \bar{g} (x+\alpha j -
  \tfrac{k}{\beta})  = \delta _{k,0} \qquad \text{a.a. } x \in \rd  \, .
\end{equation}
If $\sum _{j\in \zd } \sup _{x\in [0,\alpha ]^d} |\sigma _j(x)| < \infty $,
then  $\gabsy $ is a frame. Moreover, with
$$\gamma (x) =  \beta ^{d} \sum _{j\in \zd } \sigma _{j}(x) \chi _{[0,\alpha
    )^d}(x-\alpha j),\qquad x\in\rd,$$
the set $\mathcal{G}(\gamma,\alpha,\beta)$   is a dual frame of $\gabsy$.
\end{lemma}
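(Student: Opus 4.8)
The plan is to verify that the $\gamma$ defined in the statement fulfils condition (iv) of Theorem~\ref{charone} and then to read off the conclusion from the equivalence (i)$\Llra$(iv). Thus I must establish two things: first, that $\gamma$ satisfies the biorthogonality relation \eqref{eq:3}; second, that $\mathcal{G}(\gamma,\alpha,\beta)$ is a Bessel sequence. Once both hold, Theorem~\ref{charone} furnishes at one stroke that $\gabsy$ is a frame and that $\gamma$ is a dual window, i.e.\ that $\mathcal{G}(\gamma,\alpha,\beta)$ is a dual frame.

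For the biorthogonality I would repeat, almost verbatim, the computation carried out in the proof of Lemma~\ref{l:pre}, with the hypothesis \eqref{eq:c6} now playing the role that $\Gamma(x)P(x)=I$ played there. Unfolding the inner product and periodizing over the lattice $\alpha\zd$ gives
\begin{align*}
\langle \gamma , M_{l/\alpha } T_{k/\beta } g\rangle
&= \int_{[0,\alpha]^d} \sum_{j\in\zd} \gamma(x+\alpha j)\,\bar g(x+\alpha j - \tfrac{k}{\beta})\, e^{-2\pi i l\cdot x/\alpha}\, dx .
\end{align*}
Because $\sigma$ has period $\alpha$, the definition of $\gamma$ collapses to $\gamma(x+\alpha j)=\beta^{d}\sigma_j(x)$ for $x\in[0,\alpha)^d$, so the inner sum becomes $\beta^{d}\sum_j \sigma_j(x)\,\bar g(x+\alpha j-\tfrac{k}{\beta})=\beta^{d}\delta_{k,0}$ by \eqref{eq:c6}; integrating the exponential over $[0,\alpha]^d$ then yields $(\alpha\beta)^d\delta_{k,0}\delta_{l,0}$, which is exactly \eqref{eq:3}. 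The interchange of summation and integration used here is legitimate since $\sum_j \sup_x|\sigma_j(x)|<\infty$ and $g\in\lrd$ bound the double integral by $\alpha^{d/2}\|g\|_2\sum_j\sup_x|\sigma_j(x)|$.

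The real content is the Bessel bound, and this is where the summability hypothesis enters. I would show that $\gamma$ lies in the Wiener amalgam space $W(L^\infty,L^1)$. On the cube $Q_j=\alpha j+[0,\alpha)^d$ the function $\gamma$ equals $\beta^{d}\sigma_j$, so by the $\alpha$-periodicity of $\sigma$ the local suprema satisfy $\essupp_{x\in Q_j}|\gamma(x)|=\beta^{d}\sup_{x\in[0,\alpha)^d}|\sigma_j(x)|$, and therefore
$$
\sum_{j\in\zd}\essupp_{x\in Q_j}|\gamma(x)| = \beta^{d}\sum_{j\in\zd}\sup_{x\in[0,\alpha]^d}|\sigma_j(x)| < \infty .
$$
Since amalgam norms taken with respect to cubes of different side lengths are equivalent, this shows $\gamma\in W(L^\infty,L^1)$ (in particular $\gamma\in\lrd$), and the standard criterion (e.g.\ \cite{book}) that a window in $W(L^\infty,L^1)$ generates a Bessel sequence for every pair of lattice parameters gives that $\mathcal{G}(\gamma,\alpha,\beta)$ is Bessel. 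Combining the two steps, condition (iv) of Theorem~\ref{charone} is satisfied, so $\gabsy$ is a frame with dual window $\gamma$, which is the assertion.

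I expect the biorthogonality to be entirely routine, being a transcription of the earlier lemma, and the appeal to Theorem~\ref{charone} to be immediate. The only genuine step is verifying the Bessel property: the hypothesis $\sum_j\sup_x|\sigma_j(x)|<\infty$ is exactly calibrated to place $\gamma$ in an amalgam space, and the mild subtlety is to make the bookkeeping between this summability condition on $\sigma$ and the amalgam norm of $\gamma$ precise, together with the harmless equivalence of amalgam norms over cubes of side $\alpha$ versus side $1$.
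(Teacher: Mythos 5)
Your proposal is correct and follows essentially the same route as the paper: biorthogonality via the periodization computation already carried out in the proof of Lemma~\ref{l:pre}, the Bessel property via membership of $\gamma$ in the Wiener amalgam space $W(L^\infty,\ell^1)$ (the paper cites \cite{walnut92} for this step), and the conclusion via condition (iv) of Theorem~\ref{charone}. Your write-up merely spells out details the paper leaves implicit, such as the interchange of sum and integral and the cube-by-cube bookkeeping for the amalgam norm.
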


\begin{proof}
The  computation in~\eqref{newdual} shows that $\gamma $ satisfies the
biorthogonality condition \eqref{eq:3}.
The additional assumption implies that
$$
\sum _{k\in \zd } \sup _{x\in [0,\alpha ]^d} |\gamma (x+\alpha k)|
<\infty \, .
$$ Consequently,  $\gamma $ is in the
amalgam space $W(\ell ^1)$. This property guarantees that $\cG (\gamma
, \alpha ,\beta )$ is a Bessel system~\cite{walnut92}. Thus condition $(iv)$ of
Theorem~\ref{charone} is satisfied, and $\gabsy $ is a frame.  The
biorthogonality condition \eqref{eq:3} implies that
 $\mathcal{G}(\gamma,\alpha,\beta)$ is a  dual frame of  $\gabsy$.

\end{proof}

\subsection{Totally Positive Functions}

\noindent
The notion of totally positive functions was introduced in 1947 by I. J. Schoenberg
\cite{Schoenb1947a}.
A non-constant  measurable function $g:\bR\to\bR$
is said to be {\em totally positive}, if  it satisfies the
following  condition:
For every two sets of increasing real numbers
\begin{equation}\label{eq:numbers}
   x_1<x_2<\cdots<x_N,\qquad y_1<y_2<\cdots<y_N,\qquad N\in \bN,
\end{equation}
we have the inequality
\begin{equation}\label{eq:Dpos}
    D=\det \left[ g(x_j-y_k)\right]_{1\le j,k\le n} \ge 0.
\end{equation}
 Schoenberg \cite{Schoenb1951a} connected the   total
positivity  to a  factorization  of the (two-sided) Laplace transform of $g$,
\[
    \cL[g](s) = \int_{-\infty}^\infty e^{-st}g(t)\,dt =: \frac{1}{\Phi(s)}.
\]

\begin{tm}[Schoenberg \cite{Schoenb1947a}] \label{thm:schoen} The function $g:\bR\to\bR$ is totally positive, if and only if
its two-sided Laplace transform
exists in a strip $S=\{s\in \bC:\alpha<\text{Re}\,s<\beta\}$, and
\begin{equation}\label{eq:totpos}
   \Phi(s)=\frac{1}{\cL[g](s)} =
   Ce^{-\gamma s^2+\delta s}\prod_{\nu=1}^{\infty}(1+\delta_\nu s)e^{-\delta_\nu s},
\end{equation}
 with real parameters $C,\gamma,\delta,\delta_\nu$ satisfying
\begin{equation}\label{eq:totposcond}
   C>0,\quad \gamma\ge 0,\quad 0<\gamma+\sum_{\nu=1}^\infty \delta_\nu^2<\infty.
\end{equation}
\end{tm}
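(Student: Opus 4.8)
The plan is to treat the two implications separately, since they call for quite different tools: the sufficiency (``if'') direction is essentially constructive, building $g$ out of elementary totally positive factors, whereas the necessity (``only if'') direction rests on the variation-diminishing property of totally positive kernels together with complex-analytic factorization.

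For the \emph{sufficiency} direction, first I would identify each factor of the product \eqref{eq:totpos} with the reciprocal Laplace transform of an elementary totally positive function: the factor $e^{-\gamma s^2}$ corresponds to a Gaussian, the factor $e^{\delta s}$ to a translation, and each factor $(1+\delta_\nu s)e^{-\delta_\nu s}$ to a (possibly reflected) one-sided exponential $e^{-t/\delta_\nu}\chi_{\bR_+}$ recentred by $e^{-\delta_\nu s}$. The crucial structural fact is that total positivity is preserved under convolution, while $\cL[g_1\ast g_2]=\cL[g_1]\,\cL[g_2]$, so that products of reciprocal transforms correspond to convolutions of the underlying functions. I would prove the convolution-closure from the basic composition (Cauchy--Binet / Andr\'eief) identity
\[
\det\Big[\,(g_1\ast g_2)(x_i - y_k)\Big]_{i,k} = \int_{t_1<\cdots<t_N}\det\big[g_1(x_i-t_l)\big]_{i,l}\,\det\big[g_2(t_l-y_k)\big]_{l,k}\,dt,
\]
whose right-hand side is nonnegative whenever both factor-determinants are. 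The finite partial products of \eqref{eq:totpos} then correspond to finite convolutions of elementary totally positive functions and are therefore totally positive. Passing to the infinite product requires a limiting argument: under the normalization $0<\gamma+\sum_\nu \delta_\nu^2<\infty$ the partial convolutions converge locally uniformly to $g$, and since the defining inequalities \eqref{eq:Dpos} are closed conditions on the continuous entries $g(x_j-y_k)$, total positivity survives the limit.

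For the \emph{necessity} direction I would proceed as follows. From \eqref{eq:Dpos} with $N=1$ one gets $g\ge 0$; a finer use of total positivity shows that the two-sided Laplace transform converges and is zero-free on a maximal open strip $S$, so $\Phi = 1/\cL[g]$ is holomorphic on $S$ and positive on $S\cap\bR$. The heart of the matter is to show that $\Phi$ extends to an \emph{entire} function lying in the Laguerre--P\'olya class, i.e.\ an entire function of order at most $2$ all of whose zeros are real. The mechanism is Schoenberg's equivalence between total positivity of the kernel $g(x-y)$ and the variation-diminishing property of the convolution $f\mapsto g\ast f$: testing this property against exponential sums $\sum_j c_j e^{s_j t}$ shows that a non-real zero of $\Phi$ (equivalently, a non-real pole of $\cL[g]$) would force $g\ast f$ to have strictly more sign changes than $f$, a contradiction. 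Once $\Phi$ is known to belong to the Laguerre--P\'olya class, the Hadamard / Laguerre--P\'olya representation theorem delivers precisely the product \eqref{eq:totpos} with real parameters, $\gamma\ge 0$ and $\sum_\nu \delta_\nu^2<\infty$; the absence of a monomial factor $s^m$ follows from $\Phi(0)=1/\int g\neq 0$, and the condition $\gamma+\sum_\nu\delta_\nu^2>0$ rules out the degenerate case $\Phi(s)=Ce^{\delta s}$, in which $g$ would be a translated point mass rather than a genuine function.

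The main obstacle is the necessity direction, and within it the step establishing that $\Phi$ is entire of order at most $2$ with only real zeros. Reality of the zeros is where total positivity is genuinely used, via the variation-diminishing property, and making this rigorous requires the full quantitative theory of sign changes of convolutions together with careful order estimates for $\cL[g]$ near the edges of the strip $S$ (the width of $S$ being dictated by the extreme reciprocal zeros $-1/\delta_\nu$). By contrast, the sufficiency direction, while requiring a delicate convergence argument for the infinite product, is conceptually routine once the convolution-closure of total positivity is in hand.
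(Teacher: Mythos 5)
First, a point of order: the paper does \emph{not} prove this statement. Theorem~\ref{thm:schoen} is quoted from Schoenberg with a citation and is used purely as background, so your proposal can only be judged against the classical proof, not against anything in the paper itself.

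Your sufficiency half is essentially Schoenberg's own argument and is sound in outline: elementary factors (Gaussian, recentred one-sided exponentials), closure of total positivity under convolution via the basic composition (Andr\'eief/Cauchy--Binet) identity, and a limit over partial products, with the condition \eqref{eq:totposcond} ruling out degeneration to a point mass. The necessity half, however, contains genuine gaps, and they sit exactly where you place the weight of the proof. (i) Zero-freeness of $\cL[g]$ inside the strip is asserted (``a finer use of total positivity shows\dots'') but never argued; nonnegativity of $g$ gives positivity of $\cL[g]$ only on the real axis, and zero-freeness at complex points of the strip is itself a nontrivial consequence of the theorem, not a cheap preliminary. (ii) You need $\Phi=1/\cL[g]$ to continue analytically from the strip to an \emph{entire} function of order at most $2$; nothing in your sketch produces this continuation. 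The transform $\cL[g]$ is a priori defined only in the strip, and the existence of an entire reciprocal is a large part of the theorem's content. (iii) The variation-diminishing mechanism you invoke does not deliver the stated contradiction. Testing against exponentials uses the eigenfunction identity $g\ast e^{s\cdot}=\cL[g](s)\,e^{s\cdot}$; at a hypothetical non-real zero $s=\sigma+i\tau$ of $\cL[g]$ this yields $g\ast f\equiv 0$ for $f(t)=e^{\sigma t}\cos(\tau t)$, and the zero function has no sign changes, so no violation of variation-diminution arises. Worse, $e^{\sigma t}\cos(\tau t)$ has infinitely many sign changes and is therefore not an admissible test function for the finite sign-change counts that variation-diminution controls. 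Finally, the equivalence between total positivity of the kernel $g(x-y)$ and the variation-diminishing property of $f\mapsto g\ast f$ is itself a deep theorem that you would have to prove or cite.

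For the record, Schoenberg's actual necessity argument is different in kind: he discretizes, observing that the samples $(g(a+nh))_{n\in\bZ}$ form totally positive sequences, applies the Aissen--Edrei--Schoenberg--Whitney representation theorem for generating functions of such sequences, and then carries out a delicate limit as $h\to 0$; it is this passage that simultaneously produces the entire continuation of $\Phi$ and the reality of its zeros. As it stands, your necessity half is a plan with the decisive steps left as placeholders, not a proof.
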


A comprehensive study of total positivity is given in the book of
Karlin \cite{Karlin1968a}. It is known that, if  $g$ is totally positive and integrable,
then $g$ decays exponentially (see \cite[p.~340]{Schoenb1951a}).
In this article, we restrict our attention to the class of totally positive functions
$g\in L^1(\bR)$ with the factorization
\begin{equation}\label{eq:totposI}
   \Phi(s)=\frac{1}{\cL[g](s)} =
   Ce^{\delta s}\prod_{\nu=1}^{M}(1+\delta_\nu s) \, ,
\end{equation}
for $M\in \bN $ and real $\delta _\nu $.
This means that   the denominator of $\cL[g]$ has only finitely many roots.
Equivalently, the Fourier transform of $g$ can be extended to a
meromorphic function with a finite number of poles on the imaginary
axis and no other poles.
As noted in \cite[p.~247]{SchoenbWhit1953a}, the exponential factor
can be omitted, as it corresponds to a simple
shift of $g$. In the following we will call a totally
positive function satisfying \eqref{eq:totposI} \emph{totally
  positive of finite type} and refer to $M$ as the type of $g$.

Schoenberg and Whitney  \cite{SchoenbWhit1953a} gave a complete characterization
of the case when the determinant $D$ in \eqref{eq:Dpos}  satisfies $D>0$.

\begin{tm}[\cite{SchoenbWhit1953a}] \label{strictpos}  Let $g\in L^1(\bR)$ be  a
  totally-positive function of finite type. 
Furthermore, let $m$ be the number of positive $\delta_\nu$ and
$n$ be the number of negative $\delta_\nu$ in \eqref{eq:totposI}, 
and $m+n\ge 2$.
For a set of points in
\eqref{eq:numbers},
the determinant $D = \det [ g(x_j - y_k)]_{j,k = 1, \dots , N}$ is strictly positive,
if and only if
\begin{equation}\label{eq:SWnumbers}
   x_{j-m}< y_j<x_{j+n}\qquad \mbox{for}~~ 1\le j\le N.
\end{equation}
Here, we use the convention that $x_j=-\infty$, if $j<1$, and
$x_j=\infty$, if $j>N$.
\end{tm}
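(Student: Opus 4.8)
The plan is to use the explicit structure of a totally positive function of finite type as a Green's function, together with the basic composition (Cauchy--Binet) formula, to reduce the sign analysis of $D$ to its one-dimensional building blocks. First I would record the structural facts that drive everything. After absorbing the shift $e^{\delta s}$, the factorization $\Phi(s)=C\prod_{\nu=1}^{M}(1+\delta_\nu s)$ of Theorem~\ref{thm:schoen} shows that, up to a constant, $g$ is the decaying fundamental solution of the constant-coefficient operator $L=\prod_{\nu=1}^{M}(1+\delta_\nu \tfrac{d}{dt})$. Hence $g\in C^{M-2}(\bR)$, and on each half-line $g$ solves $Lg=0$; retaining only the modes that decay at $\pm\infty$ forces
\[
 g|_{(0,\infty)}\in U^+:=\operatorname{span}\{e^{-t/\delta_\nu}:\delta_\nu>0\},\qquad g|_{(-\infty,0)}\in U^-:=\operatorname{span}\{e^{-t/\delta_\nu}:\delta_\nu<0\},
\]
with $\dim U^+=m$ and $\dim U^-=n$ (for repeated $\delta_\nu$ these become exponential-polynomial spaces of the same dimension, handled by a limiting argument). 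Equivalently, at the level of Laplace transforms, $g=f_1*\cdots*f_M$ is a convolution of $m$ right-sided and $n$ left-sided exponentials, each of which is itself totally positive.

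For \emph{necessity} I would argue by contraposition, using only the rank structure and the following elementary fact: if an $N\times N$ matrix has a submatrix on rows $R$ and columns $C$ of rank $\le r$ with $|R|+|C|\ge N+r+1$, then its determinant vanishes. Suppose the right inequality fails, say $x_{j+n}\le y_j$ (which forces $j+n\le N$, since otherwise $x_{j+n}=\infty$ by convention). Then for all $i\le j+n$ and $k\ge j$ we have $x_i-y_k\le x_{j+n}-y_j\le 0$, so every such entry $g(x_i-y_k)$ is the value of the element of $U^-$ that represents $g$ on $(-\infty,0]$; expanding that element in its $n$ exponential basis functions factorizes this block through $\bR^n$, giving rank $\le n$. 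With $R=\{1,\dots,j+n\}$ and $C=\{j,\dots,N\}$ one has $|R|+|C|=N+n+1$, so $D=0$. The failure of the left inequality $y_j\le x_{j-m}$ is symmetric, using $U^+$ and rank $\le m$.

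For \emph{sufficiency} I would induct on $M$, the base case $M=1$ being a direct evaluation: for a single right-sided exponential $f$, the matrix $[f(z_j-y_k)]$ equals a positive diagonal scaling of the $0$--$1$ staircase matrix $[\mathbf 1(z_j\ge y_k)]$, whose determinant is positive exactly when $z_{j-1}<y_j<z_j$ (and symmetrically for a left-sided exponential). For the inductive step assume $m\ge1$ (otherwise pass to $-t$) and write $g=h*f$ with $f$ a right-sided exponential and $h$ totally positive of type $(m-1,n)$. The basic composition formula gives
\[
 D=\int_{z_1<\cdots<z_N}\det\bigl[h(x_j-z_k)\bigr]\,\det\bigl[f(z_j-y_k)\bigr]\,dz_1\cdots dz_N,
\]
an integral with a nonnegative integrand (both factors are $\ge 0$ by total positivity). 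By the base case $\det[f(z_j-y_k)]>0$ iff $y_j<z_j<y_{j+1}$, and by the inductive hypothesis $\det[h(x_j-z_k)]>0$ iff $x_{j-m+1}<z_j<x_{j+n}$; hence the integrand is strictly positive precisely on the increasing sequences $z$ with $\max(x_{j-m+1},y_j)<z_j<\min(x_{j+n},y_{j+1})$. Since $D\ge 0$ is known, it remains to show this region has positive measure whenever the Schoenberg--Whitney condition $x_{j-m}<y_j<x_{j+n}$ holds for $g$, and then $D>0$.

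The crux---and the step I expect to be the main obstacle---is this last ``room to interlace'' argument. Nonemptiness of each window reduces to the four inequalities $x_{j-m+1}<x_{j+n}$ (true since $M\ge2$), $x_{j-m+1}<y_{j+1}$ and $y_j<x_{j+n}$ (these are exactly the Schoenberg--Whitney inequalities, after an index shift), and $y_j<y_{j+1}$; an increasing transversal then exists by a greedy selection, because the right endpoints $\min(x_{j+n},y_{j+1})$ are nondecreasing in $j$, so one may choose $z_j$ in the nonempty interval $\bigl(\max(x_{j-m+1},y_j,z_{j-1}),\,\min(x_{j+n},y_{j+1})\bigr)$ at each stage. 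The delicate bookkeeping lies in tracking how peeling off one factor shifts the index bound from $m$ to $m-1$ (widening the admissible window by exactly one index per factor), together with the technical justification of the composition formula (integrability and the almost-everywhere interpretation) and the treatment of repeated parameters $\delta_\nu$.
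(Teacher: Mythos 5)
The paper offers no proof of Theorem~\ref{strictpos}: it is quoted as a known result of Schoenberg and Whitney \cite{SchoenbWhit1953a} and used as a black box in the proof of Theorem~\ref{main0}, so there is no internal argument to compare yours against. What you have done is reconstruct the classical proof, and your reconstruction is essentially correct. The structural input (after removing the shift $e^{\delta s}$, the factorization \eqref{eq:totposI} exhibits $g$ as a convolution of $m$ right-sided and $n$ left-sided decaying exponentials, so that $g$ agrees on each half-line with an element of an $m$-, respectively $n$-dimensional exponential(-polynomial) space), the necessity argument via the rank/zero-block count, and the sufficiency induction on $M$ via the basic composition formula --- staircase determinant as base case, greedy interlacing selection to produce a nonempty open region where the integrand is positive --- all check out, including the index bookkeeping ($x_{j-m+1}<z_j<x_{j+n}$ for the peeled factor, monotonicity of the right endpoints $\min(x_{j+n},y_{j+1})$, and the use of $m+n\ge 2$ to guarantee $x_{j-m+1}<x_{j+n}$).

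Two points need more care than you give them. First, in the necessity step, ``expanding that element in its $n$ exponential basis functions factorizes this block through $\bR^n$'' is literally correct only when the $\delta_\nu$ are pairwise distinct: for a root of multiplicity $\mu$, expanding $(x_i-y_k)^j e^{-(x_i-y_k)/\delta}$, $j<\mu$, into separable products of functions of $x_i$ and of $y_k$ produces up to $\binom{\mu+1}{2}$ terms, not $\mu$, so the naive count does not give rank $\le n$. The bound is nevertheless true, because the exponential-polynomial space $U^-$ is translation invariant: for each fixed $y_k$ the function $x\mapsto g(x-y_k)$, restricted to $x\le y_k$, again lies in the $n$-dimensional space $U^-$, and expanding it in a fixed basis of $U^-$ factorizes the block through $\bR^n$; alternatively one can run the limiting argument you allude to (determinants depend continuously on the $\delta_\nu$, and a limit of zeros is zero). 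Second, the convention at the jump matters in the base case: with $[\mathbf{1}(z_j\ge y_k)]$ the determinant equals $1$ when $z_j=y_j$ for all $j$, while the Schoenberg--Whitney condition fails, so the stated ``iff'' requires the open-support convention $f(0)=0$; relatedly, the hypothesis $m+n\ge 2$ is what makes $g$ continuous, so that entries with $x_i-y_k=0$ are unambiguous in your necessity argument. Both points are routine to repair and are consistent with the measure-zero caveats you already flag.
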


The conditions in \eqref{eq:SWnumbers} are nowadays called the Schoenberg-Whitney
conditions for $g$. They have been used extensively in the analysis of spline
interpolation by Schoenberg and others (see the monograph~\cite{Sch81}).  They will  be crucial  for our
construction of a left inverse of the pre-Gramian matrix in \eqref{eq:1}.

As a generalization of the pre-Gramians  in \eqref{eq:1},
we consider bi-infinite matrices of the form
\begin{equation}\label{eq:biinf}
    P= \left[ g(x_j-y_k)\right]_{j,k\in \bZ} ,
\end{equation}
where each  sequence $X=(x_j)_{j\in\bZ}$ and $Y=(y_k)_{k\in\bZ}\subseteq
\bR $ is
strictly increasing.
Moreover,  the sequence
 $(x_j)_{j\in\bZ}$ is supposed to be denser
in the  sense of the following condition:
\[
  (C_r)~~\left\{
  \parbox{0.8\textwidth}{\parindent0pt
   (a)   ~~ every  interval $(y_k,y_{k+1})$ contains at least one point $x_j$;\\[5pt]
   (b)   ~~ there is an $r\in \bN$ such that  $|(y_k,y_{k+r})\cap X|\ge r+1$
   for all $k$.
   }\right.
\]

Our main tool for the study of Gabor frames and sampling theorems
will be the following technical  result. It can be interpreted as a
suitable extension of total positivity to infinite matrices.

\begin{tm}\label{main0} Let $g\in L^1(\bR)$ be  a  totally-positive
  function of finite type. 
 Let $m$ be the number of positive $\delta_\nu$,
$n$ be the number of negative $\delta_\nu$ in \eqref{eq:totposI}, and
$M=m+n\ge 1$.
Assume that the sequences  $(x_j)_{j\in\bZ}$ and
$(y_k)_{k\in\bZ}\subset \bR $  satisfy condition ($C_r$).

Then the matrix $P=\left[ g(x_j-y_k)\right]_{j,k\in \bZ}$
defines a bounded operator on $\ell_2(\bZ)$. It has an algebraic
left-inverse $\Gamma=\left[ \gamma_{k,j}\right]_{k,j\in \bZ}$,
and
\begin{equation}
  \label{eq:ch24}
    \gamma_{k,j}=0,\qquad\text{if}\quad x_j<y_{k-rm}~~
    \text{or}~~x_j>y_{k+rn}.
\end{equation}
  \end{tm}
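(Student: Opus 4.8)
The plan is to establish $\Gamma P = I$ one row at a time. Writing $\gamma_k=(\gamma_{k,j})_{j\in\bZ}$ for the $k$-th row, the requirement is
$$\sum_{j\in\bZ}\gamma_{k,j}\,g(x_j-y_l)=\delta_{k,l},\qquad l\in\bZ,$$
and the banding claim \eqref{eq:ch24} amounts to asking $\gamma_k$ to be supported on the finite index set $J_k=\{j: y_{k-rm}\le x_j\le y_{k+rn}\}$. Equivalently, setting $F_k(t)=\sum_{j\in J_k}\gamma_{k,j}\,g(x_j-t)$, I must produce a \emph{fundamental interpolant} whose knots lie among $\{x_j:j\in J_k\}$ and which satisfies $F_k(y_l)=\delta_{k,l}$ for \emph{all} $l$. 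The difficulty is that this is \emph{a priori} an infinite system of conditions for finitely many unknowns; the whole proof is about showing it collapses to a square, uniquely solvable finite system. Before this, boundedness is easy: since $g\in L^1(\bR)$ is totally positive it decays exponentially (as recalled after Theorem~\ref{thm:schoen}), say $|g(t)|\le Ce^{-a|t|}$, so with the separation of the nodes both the row sums $\sum_k|g(x_j-y_k)|$ and column sums $\sum_j|g(x_j-y_k)|$ are dominated by a convergent geometric series uniformly, and a Schur test gives $P\in B(\ell^2(\bZ))$.

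The key structural input is that $g$ is an \emph{exponential spline}. Taking Laplace transforms in \eqref{eq:totposI}, the relation $\Phi(s)\,\cL[g](s)=1$ (after discarding the harmless factor $Ce^{\delta s}$) says that $g$ is, up to a constant, the Green's function of $L=\prod_{\nu=1}^{M}(1+\delta_\nu D)$, $D=d/dt$. Hence $g$ and its reflection are piecewise solutions of a constant-coefficient ODE of order $M$, and $F_k(t)=\sum_{j}\gamma_{k,j}\,g(x_j-t)$ is annihilated by $\prod_{\nu=1}^{M}(1-\delta_\nu D)$ on every interval between consecutive knots; on the two unbounded tails it is therefore a pure solution of this homogeneous equation.

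Because $g\in L^1$ forces $g(x_j-t)\to0$ as $t\to\pm\infty$, the right tail of $F_k$ lies in the span of the $n$ exponentials $e^{t/\delta_\nu}$ with $\delta_\nu<0$ (which decay at $+\infty$), and the left tail in the span of the $m$ exponentials $e^{t/\delta_\nu}$ with $\delta_\nu>0$ (which decay at $-\infty$). These are extended Chebyshev systems: a nonzero real combination of $p$ distinct exponentials has at most $p-1$ real zeros. Consequently, if I enforce $F_k(y_l)=0$ at the $n$ nodes immediately to the right of the last knot and at the $m$ nodes immediately to the left of the first knot, each tail must vanish identically, whereupon $F_k(y_l)=0$ becomes \emph{automatic} at every remaining far node. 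This is exactly the mechanism that reduces the infinite system to the finitely many conditions indexed by the nodes in the central block together with these $m+n$ tail nodes.

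It remains to solve this finite system, and here I would invoke Schoenberg--Whitney (Theorem~\ref{strictpos}). Its coefficient matrix is a finite square submatrix $[g(x_j-y_l)]$ of $P$ of the form \eqref{eq:Dpos}, whose determinant is strictly positive precisely when the interlacing conditions \eqref{eq:SWnumbers} hold between the chosen knots $x_j$ and nodes $y_l$. Condition $(C_r)$ is calibrated exactly so that $J_k$ contains enough knots, the central and tail ranges contain enough nodes, and the knots and nodes can be matched to satisfy \eqref{eq:SWnumbers}; the offsets $rm$ and $rn$ in \eqref{eq:ch24} are precisely the left/right reach produced by this matching. With the determinant positive, the system has a unique solution $(\gamma_{k,j})_{j\in J_k}$ for the right-hand side $e_k$, and assembling these rows yields an algebraic left inverse $\Gamma$ with the banded support \eqref{eq:ch24}. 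I expect the real obstacle to be the bookkeeping in this last step: checking that $(C_r)$ \emph{simultaneously} forces the correct count of knots versus nodes (so the local system is square) and the Schoenberg--Whitney interlacing (so it is invertible), uniformly in $k$, while keeping the support inside the stated window.
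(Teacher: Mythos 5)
Your overall architecture is the same as the paper's: build $\Gamma$ row by row, reduce each row to a finite square system whose coefficient matrix is a submatrix of $P$, invert it using the Schoenberg--Whitney theorem, and pad with zeros. Where you genuinely diverge is in how the infinitely many conditions at the \emph{far} nodes are disposed of. The paper's Step~4 is purely algebraic: adjoining a far column to the selected square matrix $P_0$ and deleting any admissible middle column violates \eqref{eq:SWnumbers}, hence the enlarged determinant vanishes, so every far column lies in the span of the first $m$ (resp.\ last $n$) columns of $P_0$; the chosen row of $P_0^{-1}$ annihilates those columns, so the far conditions hold automatically. Your mechanism is analytic: $g$ is the Green's function of $\prod_\nu(1+\delta_\nu D)$, so on the two unbounded tails $F_k$ is a combination of the $n$ (resp.\ $m$) decaying exponential modes, and vanishing at $n$ (resp.\ $m$) tail nodes plus zero-counting for (extended) Chebyshev systems forces each tail to vanish identically. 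This reasoning is sound, and it is a legitimate alternative to the paper's determinant argument; it even explains \emph{why} the paper's linear-dependence phenomenon occurs. Note that the two set-ups are combinatorially identical: in the paper's square matrix the first $m$ nodes lie to the left of all selected knots and the last $n$ nodes to the right of them, which are exactly your tail nodes.

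However, as a proof the proposal has a genuine gap, and it sits exactly where the paper does most of its work (Steps~1--3 and~5--6). You assert, but never verify, that $(C_r)$ permits choosing, uniformly in $k$: a window of $N$ consecutive columns (the paper takes $N=(m+n-1)(r+1)$ for $n>1$, columns $k_1=k-(r+1)m+1$ through $k_2=k_1+N-1$); a subset of \emph{exactly} $N$ knots inside $(y_{k_1+m-1},y_{k_2-n+1})$ interlacing with the central nodes so that \eqref{eq:SWnumbers} holds; the target node $y_k$ landing among the \emph{central} nodes (otherwise your tail argument would force $F_k(y_k)=0$, contradicting $F_k(y_k)=1$); and all selected knots lying in $[y_{k-rm},y_{k+rn}]$ so that \eqref{eq:ch24} comes out with precisely these offsets. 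This counting is not routine --- it dictates the value of $N$ and the asymmetric choice of $k_1$ --- and your closing sentence concedes it is missing rather than supplying it. Two smaller points: your Schur-test argument for the boundedness of $P$ invokes ``separation of the nodes,'' which is not part of condition $(C_r)$ (the paper is equally silent on this claim, but you cannot derive it from the stated hypotheses); and Theorem~\ref{strictpos} as stated requires $m+n\ge 2$, so the cases $M=1$ and $m=0$ or $n=0$ --- where $g$ is supported on a half-line and Schoenberg--Whitney must be bypassed, handled in the paper's Step~7 --- need a separate treatment that the proposal does not give.
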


\begin{proof}
We construct a left-inverse $\Gamma$ with the desired properties by defining each
row of $\Gamma$ separately.
It suffices to consider the row  with index $k=0$, as the construction of all
other rows is done in the same way.  The goal of  the first three  steps is
to select a  finite subset of  $x_j$'s and $y_k$'s that satisfy the
Schoenberg-Whitney conditions. (Our choice of indices is not unique and
not symmetric in $m$ and $n$, it minimizes the number of case
distinctions.)

\textbf{Step~1: Column selection.}
First, consider the case $m,n>0$ and set
$N:=(m+n-1)(r+1)$, if $n>1$,
and $N=m(r+1)+1$, if $n=1$.
We define an $N\times N$  submatrix $P_0$ of $P$ in the following way.
As columns of $P_0$, we select columns of $P$ between
\[
   k_1=-(r+1)m+1\quad\text{ and  }\quad k_2=k_1+N-1 \, .
\]
 Hence $k_2=(r+1)(n-1)$ for  $n>1$, and
 $k_2=1$ for  $n=1$. For later purposes, note that $k_1\le -m<0<n\le k_2$. Therefore, the
 column with index $k=0$ has at least $m$ columns to its left and $n$ columns to its right.

\textbf{Step~2: Selection of a square matrix. }
 Assumption ($C_r$) and our definition of $N$ imply that
the interval $I=(y_{k_1+m-1},y_{k_2-n+1})$ contains at least
$N$ points $x_j$. More precisely,  for $n>1$ we write
\[
   (y_{k_1+m-1},y_{k_2-n+1})=(y_{-rm},y_{r(n-1)}) = \bigcup _{\nu =
  -m}^{n-2} (y_{r\nu},y_{r(\nu+1)})
\]
and find at least
$r+1$ points $x_j$ in each subinterval
$(y_{r\nu},y_{r(\nu+1)})$ with $-m\le \nu\le n-2$. This  amounts to at
least $(m+n-1)(r+1) = N$ points in $I$.  If  $n=1$,
we have $(y_{k_1+m-1},y_{k_2-n+1})=(y_{-rm},y_1)$ and find
$m(r+1)$ points $x_j$ in
$(y_{-rm},y_{0})$ plus  at least  one additional point in $(y_0,y_1)$.

We let
\begin{equation}
  \label{eq:ch23}
   j_1:= \min\{j: x_j>y_{k_1+m-1}\},\qquad      j_2:= \max\{j:
   x_j<y_{k_2-n+1}\}\, .
\end{equation}
  We have just shown that  the set
\[
    X_0=\{x_j: j_1\le j\le j_2\}\subset (y_{k_1+m-1},y_{k_2-n+1})
\]
contains at least $ N$ elements. We choose a subset
\[
   X_0'=\{\xi_1<\cdots<\xi_N\}\subset X_0,
\]
that contains  precisely $N$ elements and satisfies
\[
   (y_k,y_{k+1})\cap X_0'\ne \emptyset\quad\text{ for }\quad
   k_1+m-1\le k\le k_2-n.
\]
That is, we choose one point $x_j$ in  each
interval $(y_k,y_{k+1})$,
with $k_1+m-1\le k\le k_2-n$,
and an additional $n+m-1$ points $x_j\in (y_{k_1+m-1},y_{k_2-n+1})$.
Note that
\begin{equation}\label{eq:X0prime}
  y_{k_1+m-1}<\xi_1=\min X_0' < y_{k_1+m} <
   y_{k_2-n}<\max X_0'=\xi_N < y_{k_2-n+1}.
\end{equation}
Now set
\[
   \eta_k= y_{k_1+k-1},\qquad 1\le k \le N \, ,
\]
and define  the matrix
\[
   P_0=(g(\xi_j-\eta_k))_{j,k=1,\ldots,N}.
\]
Then $P_0$ is a quadratic $N\times N$-submatrix of $P$.

\textbf{Step~3: Verification of the Schoenberg-Whitney conditions.} We
next show that $P_0$ is invertible  by checking the
Schoenberg-Whitney conditions. First, by \eqref{eq:X0prime}, we have
\[
   \xi_1=\min X_0' < y_{k_1+m} = \eta_{m+1}.
\]
By the construction of $X_0'$, this inequality progresses from left to right, i.e.,
\[
   \xi_j< y_{k_1 + m -1 +j} = \eta_{j+m}\qquad\text{for}\quad 1\le j\le N-m.
\]
Likewise,  we also have
\[
   \eta_{N-n}= y_{k_2-n} <\max X_0'=\xi_N,
\]
and  this inequality progresses from right to left, i.e.,
\[
   \eta_j<\xi_{j+n}\qquad\text{for}\quad 1\le j\le N-n.
\]
Therefore, the Schoenberg-Whitney conditions~\eqref{eq:SWnumbers}  are satisfied,
and  Theorem~\ref{strictpos}  implies that $\det P_0>0$.

\medskip
\textbf{Step~4: Linear dependence of the remaining  columns of $P$.}
We now make some important  observations.

 Choose indices $k_0$ and $s\in \bZ$  with $k_0< k_1$ and $m<s \le N$,
 and consider the new
  set  $\{ \eta _k ' : k=1, \dots , N\} $ consisting of the points $$y_{k_0}
< y_{k_1} <  \cdots < y_{k_1 +s -2} <y_{k_1+s}< \cdots  < y_{k_2}$$
  and  the corresponding $N\times N$-matrix $P_0' = ( g(\xi
_j - \eta _k ')) _{j,k = 1, \dots , N}$. This matrix is
obtained from $P_0$ by
 adding the column $\big(g(\xi_j-y_{k_0})\big)_{1\le j\le N}$ as its first
 column and deleting the column  $\big(g(\xi_j-\eta _s)\big)_{1\le
   j\le N}$. Then  $\eta_{m}$ appears in the $m+1$-st column of
 $P_0'$. By \eqref{eq:X0prime}, we see that
\[
   \eta _{m+1}' = \eta _m  = y_{k_1 +m-1} <  \xi _1.
\]
Consequently, the Schoenberg-Whitney conditions are violated
and therefore $\det P_0' = 0$ by Theorem~\ref{strictpos}. Since this holds for all $s >m$,
the vector   $(g(\xi_j-y_{k_0}))_{1\le j\le N}$ must be  in the linear
  span of the first $m$ columns of $P_0$, namely
  $(g(\xi_j-y_k))_{1\le j\le N}$ for $k= k_1, , \dots k_1 + m-1$.

 Likewise, choose  $k_3>k_2$, $1\le s \le N-n$, and  consider the new
  set  $\{ \eta _k'' : k=1, \dots , N\} $ consisting of the points $$
 y_{k_1} < \cdots <  y_{k_1 + s -2} <  y_{k_1 + s} <\cdots <
 y_{k_2}   < y_{k_3}$$ and  the corresponding $N\times N$-matrix
 $P_0'' = ( g(\xi
_j - \eta _k '')) _{j,k = 1, \dots , N}$. This matrix is
obtained from $P_0$ by
 adding the column $(g(\xi_j-y_{k_3}))_{1\le j\le N}$ as its last (=$N$-th)
 column and deleting the column  $\big(g(\xi_j-\eta _s)\big)_{1\le
   j\le N}$. Then  $\eta_{N-n+1}$ appears in the $n+1$-st column of
 $P_0''$, counted from  right to left,
 and $$\eta _{N-n}'' = \eta _{N-n+1}   = y_{k_1 +N-n} = y_{k_2 -n+1 } >
   \xi _N $$ by \eqref{eq:X0prime}. Again the Schoenberg-Whitney conditions are violated and
therefore $\det P_0'' = 0$. We conclude that the vector
$(g(\xi_j-y_{k_3}))_{1\le j\le N}$ must lie  in the linear
  span of the last $n$ columns of $P_0$.


\textbf{Step~5: Construction of the left-inverse. }
Recall that $k_1 = -m(r+1)+1$ and that $\eta _{(r+1)m} = y_{k_1 +
  (r+1)m-1} = y_0$.
Let $ c^T$ denote
the $(r+1)m$-th  row vector of $P_0^{-1}$.
 By definition of the
inverse, we have
$\sum _{j=1}^N c_j g(\xi _j - \eta _k) = \delta _{k , (r+1)m}$,
or equivalently, for $k_1 \leq k \leq k_2$,
\begin{equation}\label{eq:ckg}
\sum _{j=1}^N c_j g(\xi _j - y_k ) = \delta _{k , 0}  \, .
\end{equation}
Let us now consider the other columns with $k<k_1$ or $k>k_2$.
Since $k_1\le -m<0<n\le k_2$ and every vector $(g(\xi_j-y_k))_{1\le
  j\le N}$ lies either in the span of the first $m$ columns of $P_0$
(for $k< k_1$) or in the span of the last $n$ columns of $P_0$ (for
$k>k_2$), we obtain that
$$
\sum _{j=1}^N c_j g(\xi _j - y_k ) = 0\, .
$$
Therefore, the identity \eqref{eq:ckg} holds for all $k\in \bZ$.

Next we fill the vector $c$ with zeros and define the infinite vector
$\gamma _0$ by
$$
\gamma _{0,j} =
\begin{cases}
  c_{j'} & \text{ if } x_j=\xi_{j'} \in X_0' \\
 0 & \text{ otherwise}.
\end{cases}
$$
Then
$$
\sum _{j\in \bZ } \gamma _{0,j} g(x_j - y_k ) = \sum _{j=1}^N
c_{j} g(\xi _j - y_k ) = \delta _{k,0} \, .
$$
Thus $\gamma _0 $ is a row of the left-inverse of $P$. By
construction, $\gamma _0$ has at most $N$ non-zero entries.
In particular,  if $x_j<y_{k_1+m-1} = y_{-rm}$, then  we have
$j<j_1$ and thus  $\gamma_{0,j}=0$.
Similarly,  if $x_j>y_{k_2-n+1} = y_{r(n-1)}$ for $n>1$,
and $x_j>y_{k_2-n+1} =y_1$ for $n=1$,
then we have
$j>j_2$ and  $\gamma_{0,j}=0$.
This gives the support  properties of the entries $\gamma_{0,k}$
of row $k=0$ of the left-inverse $\Gamma$.



\bigskip

\textbf{Step~6: The other rows of $\Gamma $.} The construction of the
$k$-th row of $\Gamma $ is similar. We choose the columns between $k_1
= k - (r+1)m +1$ and $k_2 = k_1 +N-1$ of $P$, and, accordingly, we
choose suitable rows between the indices
$j_1 = \min \{j: x_j>y_{k_1+m-1}\}$ and $j_2:= \max\{j:
   x_j<y_{k_2-n+1}\} $. Then the column  of $P$ containing $y_k$ has at
   least $m$ columns to its left and $n$ columns to its right. We then
   proceed to select $\xi _j$'s and define an $N\times N$-matrix $P_k $ and
   verify that $\det P_k >0$. The $k$-th row of $\Gamma $ is
   obtained by   padding  the appropriate
   row (with row-index $(r+1)m$) of $P_k^{-1}$ with zeros.
   By this construction one obtains a vector $\gamma_k=
   (\gamma_{k,j})_{j\in\bZ}$,
   for which
$$
\sum _{j\in \bZ } \gamma _{k,j} g(x_j - y_l ) =  \delta _{k,l}
$$
holds. Furthermore, $\gamma_{k,j}=0$  when $ x_j<y_{k-rm}$ and when $x_j>y_{k+rn}$.

\bigskip

\textbf{Step~7: The remaining cases. }
The cases where $m=0$ or $n=0$ are simple adaptations of the above steps. For $m=0$,
$n\geq 2$  we choose $N=(n-1)(r+1)$. The indices for the submatrix
$P_k$ that occurs in the construction of the $k$-th row of $\Gamma$
are $k_1=k$, $k_2= k+N-1$, $j_1 = j_1(k) = j_1=\max\{j: x_j<y_{k_1}\}$
and $j_2= j_2(k) = \max \{ j: x_j < y_{k_2}\}$. Now we proceed as
before.

We note that Step~4 simplifies a bit. For  $m=0$  the
function $g$ is supported on $(-\infty , 0)$
by~\cite[p.~339]{Schoenb1951a}. Consequently,  
 if $k_0 < k_1$ and $j\geq j_1$, then $x_j - y_{k_0} 
>0$ and  $g(x_j -
y_{k_0}) = 0$.

Thus  the column vectors of $P$ to the left of the submatrix $P_0$
are identically zero and no further proof is needed for linear dependence.

 The case $n=0$ is similar. It can be reduced to the previous case by
a reflection $x\to -x$, which interchanges the role of $m$ and $n$. 

 The special case of $m=0$ and
$n=1$ can be solved by taking $N=2$, $k_1 = k$,  $k_2=k+1$, $j_1=\max
\{ j: x_j < y_k\}$,
$j_2=\min\{j: x_j > y_k\}$, and the $2\times 2$-matrix
\[
    P_k = \left(\begin{matrix} g(x_{j_1}-y_k) & g(x_{j_1}-y_{k+1}) \\
    g(x_{j_2}-y_k)& g(x_{j_2}-y_{k+1})\end{matrix}\right).
\]
In this simple case the size of the matrix is independent of the
parameter $r$ occurring in condition $C_r$.
\end{proof}

\section{Gabor Frames with Totally Positive functions} \label{3}

In this section we prove the main result about Gabor frames.
Recall that a totally positive function is said to be of finite type,
if its two-sided Laplace transform factors as $\cL [g](s)\inv  =  Ce^{\delta
  s}\prod_{\nu=1}^{M}(1+\delta_\nu s)$ with real numbers $\delta,\delta_\nu$.

\begin{tm}\label{main1}
Assume that $g$ is a totally positive function of finite type and
$M=m+n\geq 2$, where $m$ is the number of positive zeros
and $n$ the number of negative zeros of $1/\cL(g)$.
Then $\cF (g) =  \{ (\alpha , \beta ) \in \bR _+^2 : \alpha
\beta <1\}$.
The Gabor frame $\gabsy $ possesses a piecewise
continuous dual window $\gamma$ with compact support
in $[-\tfrac{rm}{\beta}-\alpha,
\tfrac{rn}{\beta}+\alpha]$,
where $r:=\lfloor \tfrac{1}{1-\alpha\beta} \rfloor$.
\end{tm}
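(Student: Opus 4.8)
The plan is to prove the two inclusions of $\cF(g) = \{(\alpha,\beta)\in\rplus : \alpha\beta < 1\}$ separately, with essentially all the work in the sufficiency direction. For the inclusion $\cF(g) \subseteq \{\alpha\beta < 1\}$ I would argue that a totally positive function of finite type $M \ge 2$ has $\hat{g}(\xi) = \prod_{\nu=1}^M (1 + 2\pi i\delta_\nu \xi)\inv$ decaying like $|\xi|^{-M}$, so that $g$ is continuous, exponentially decaying, and lies in the Feichtinger algebra $M^1$; the density theorem then gives $\alpha\beta \le 1$ and the Balian--Low theorem upgrades this to $\alpha\beta < 1$, exactly as recalled in the introduction. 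Everything else is devoted to showing that $\alpha\beta < 1$ already forces $\gabsy$ to be a frame, which by Lemma~\ref{l:pre} amounts to producing a measurable, uniformly bounded family of left-inverses of the pre-Gramians $P(x)$.

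So fix $(\alpha,\beta)$ with $\alpha\beta < 1$ and set $r = \lfloor (1-\alpha\beta)\inv\rfloor$. I would view the pre-Gramian $P(x)$ of \eqref{eq:1} as the bi-infinite matrix $[g(x_j - y_k)]_{j,k}$ of Theorem~\ref{main0} with the two sequences $x_j = x + j\alpha$ and $y_k = k/\beta$, and first check that condition $(C_r)$ holds uniformly in $x$. Part (a) is immediate, since every gap $(y_k, y_{k+1})$ has length $1/\beta > \alpha$ and therefore always contains a point of the $\alpha$-spaced set $x + \alpha\bZ$. For part (b) the choice of $r$ is precisely what is needed: $r = \lfloor (1-\alpha\beta)\inv\rfloor > \alpha\beta(1-\alpha\beta)\inv$ rearranges to $r/(\alpha\beta) > r+1$, so the interval $(y_k, y_{k+r})$ of length $r/\beta$ contains at least $r+1$ points of $x + \alpha\bZ$, again for every $x$. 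Theorem~\ref{main0} then supplies, for each $x$, a left-inverse $\Gamma(x)$ of $P(x)$; I only need its row $k = 0$, a vector $(\gamma_{0,j}(x))_j$ obeying $\sum_j \gamma_{0,j}(x)\, g(x + j\alpha - k/\beta) = \delta_{k,0}$ and, by \eqref{eq:ch24}, supported in the finitely many indices $j$ with $-rm/\beta \le x + j\alpha \le rn/\beta$.

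The crux is to promote this pointwise algebraic identity to the uniform estimate $\sum_j \sup_{x\in[0,\alpha)} |\gamma_{0,j}(x)| < \infty$ needed for the dual window. Here I would use that $\gamma_{0,j}(x)$ is a zero-padding of a fixed row of $P_0(x)\inv$, where $P_0(x)$ is the $N\times N$ Schoenberg--Whitney submatrix of Steps~2--3 with $N = (m+n-1)(r+1)$ independent of $x$. As $x$ traverses $[0,\alpha)$ the lattice $x + \alpha\bZ$ slides across the fixed grid $\tfrac1\beta\bZ$, and the canonical selection of the points $\xi_1 < \dots < \xi_N$ (say, the leftmost admissible $x_j$ in each required gap) changes only at finitely many breakpoints. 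On each of the finitely many resulting subintervals the selection is constant, $x \mapsto P_0(x)$ is continuous, and $\det P_0(x) > 0$ by Theorem~\ref{strictpos}; continuity on the compact closure yields $\inf_x \det P_0(x) > 0$ together with a finite bound on the cofactors, hence a uniform bound on the entries $\gamma_{0,j}(x)$ and, at the same time, their piecewise continuity. Since at most $N$ indices $j$ are ever nonzero, the bound $\sum_j \sup_{x\in[0,\alpha)} |\gamma_{0,j}(x)| < \infty$ follows. Defining $\gamma$ by \eqref{eq:9}, the computation \eqref{newdual}---which uses only the left-inverse identity on $[0,\alpha)$---gives the biorthogonality \eqref{eq:3}, while the uniform bound places $\gamma$ in the amalgam space $W(\ell^1)$, so that $\cG(\gamma,\alpha,\beta)$ is a Bessel sequence; condition (iv) of Theorem~\ref{charone} (the mechanism of Lemma~\ref{l:pre1}) then shows that $\gabsy$ is a frame with dual window $\gamma$. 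The support claim drops out at once: $\gamma_{0,j}(x)\ne 0$ with $x\in[0,\alpha)$ forces $-rm/\beta \le x + j\alpha \le rn/\beta$, so $\supp\gamma \subseteq [-\tfrac{rm}{\beta}-\alpha, \tfrac{rn}{\beta}+\alpha]$, and its piecewise continuity was established above.

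I expect the genuine obstacle to be exactly this uniform control: organizing the combinatorial selection of the submatrix $P_0(x)$ so that it is piecewise constant and measurable in $x$, and, above all, excluding degeneration $\inf_x \det P_0(x) = 0$ at the breakpoints where the selection jumps. The finiteness of the set of breakpoints on the fundamental cell $[0,\alpha)$ and the strict positivity from Theorem~\ref{strictpos} on each closed piece are what rescue the argument. The degenerate cases $m = 0$ or $n = 0$ would be handled by the minor adaptations already recorded in Step~7 of Theorem~\ref{main0}.
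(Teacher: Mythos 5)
Your overall strategy is the same as the paper's: Balian--Low for necessity, then for $\alpha\beta<1$ apply Theorem~\ref{main0} to $x_j=x+\alpha j$, $y_k=k/\beta$ with $r=\lfloor(1-\alpha\beta)\inv\rfloor$, and feed a uniformly bounded, finitely supported zero-th row of the left-inverse into Lemma~\ref{l:pre1}; the support claim is then read off from \eqref{eq:ch24}. The gap is in the one step you yourself flag as the crux, and your proposed resolution of it is wrong. You fix a single global ``canonical'' (leftmost-admissible) selection rule for the submatrix $P_0(x)$, note that it is constant between finitely many breakpoints, and claim that ``continuity on the compact closure'' of each piece plus ``strict positivity from Theorem~\ref{strictpos} on each closed piece'' gives $\inf_x\det P_0(x)>0$. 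This fails: at a breakpoint the selection degenerates, the Schoenberg--Whitney inequalities become equalities, and then the \emph{only if} direction of Theorem~\ref{strictpos} forces the limiting determinant to be exactly $0$, so the infimum of $\det P_0(x)$ over the adjacent open piece is $0$. Concretely, take $g(t)=e^{-\abs{t}}$ (so $m=n=1$), $\beta=1$, $\alpha=\tfrac12$, hence $r=2$ and $N=m(r+1)+1=4$, with columns $\eta=(y_{-2},y_{-1},y_0,y_1)=(-2,-1,0,1)$. For $x\in(0,\tfrac12)$ the leftmost rule selects $\xi=(x-2,\,x-\tfrac32,\,x-1,\,x)$; as $x\to 0^+$ the required strict inequalities $\eta_2<\xi_3$ and $\eta_3<\xi_4$ degenerate to $-1=-1$ and $0=0$, and a direct computation (or Theorem~\ref{strictpos}) shows $\det P_0(x)\to 0$. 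So the uniform bound on the entries of $P_0(x)\inv$ via Cramer's rule, which is what the whole construction of the dual window rests on, does not follow from your argument.

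The paper's proof avoids exactly this trap by \emph{not} using a global selection rule. For each fixed $x_0\in[0,\alpha]$ it chooses a selection of rows that satisfies the Schoenberg--Whitney conditions \emph{strictly at $x_0$ itself}; this is always possible because a lattice point sitting exactly on some $y_k$ is still in the open interval $I=(y_{k_1+m-1},y_{k_2-n+1})$ and may be used as one of the $m+n-1$ ``extra'' points (in the example above, at $x_0=0$ one takes $\xi=(-\tfrac32,-1,-\tfrac12,\tfrac12)$, which satisfies SW strictly). Since the SW inequalities at $x_0$ are strict and $g$ is continuous, the same index selection remains valid on a whole neighborhood $U_{x_0}$, and $\det P_0(y)\ge\det P_0(x_0)/2$ on a smaller neighborhood $V_{x_0}$. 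Compactness of $[0,\alpha]$ then yields a finite subcover and a uniform $\delta>0$ bounding all determinants from below, with piecewise continuity of $x\mapsto\gamma(x)$ coming from the finitely many patches rather than from your breakpoint decomposition. In short: the selection must be allowed to depend on the base point and only needs to be locally constant; insisting on one canonical rule, constant between breakpoints, is precisely what lets the determinant collapse at the seams.
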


By taking a \ft , we obtain the following corollary.

\begin{cor}
  If $h(\tau ) =  C \prod_{\nu=1}^{M}(1+2\pi i \delta_\nu \tau )\inv
  $ for $M\geq 2$, then $\cF (h) =  \{ (\alpha , \beta ) \in \bR _+^2 : \alpha
\beta <1\}$ and $\cG (h, \alpha, \beta )$ possesses a bandlimited dual
window $\theta$ with $\supp \, \hat{\theta} \subseteq [-\tfrac{rm}{\alpha}-\beta,
\tfrac{rn}{\alpha}+\beta]$.
\end{cor}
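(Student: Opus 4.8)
The plan is to deduce the corollary from Theorem~\ref{main1} by Fourier-transform duality, exploiting that $h$ is precisely the Fourier transform of a totally positive function of finite type. First I would set $g:=\cF^{-1}h$ and verify that $g$ is totally positive of finite type with the same parameters $\delta_\nu$. Indeed, with the normalization $\hat g(\xi)=\cL[g](2\pi i\xi)$, the hypothesis $h(\tau)=C\prod_{\nu=1}^{M}(1+2\pi i\delta_\nu\tau)\inv$ gives $\cL[g](s)\inv=C\inv\prod_{\nu=1}^{M}(1+\delta_\nu s)$, which is exactly the finite-type factorization \eqref{eq:totposI}. By Schoenberg's theorem (Theorem~\ref{thm:schoen}) $g$ is totally positive, it lies in $L^1\cap L^2$ and decays exponentially, and it has type $M=m+n\geq 2$ with the \emph{same} counts $m$ (positive $\delta_\nu$) and $n$ (negative $\delta_\nu$) as $h$. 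Hence Theorem~\ref{main1} applies to $g$.

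The second ingredient is the unitary intertwining of Gabor systems under $\cF$. A direct computation gives $\cF(M_bT_ag)=e^{2\pi i ab}\,M_{-a}T_b\hat g$ for all $a,b$, so that $\cF$ carries $\cG(g,\beta,\alpha)$ onto $\cG(\hat g,\alpha,\beta)=\cG(h,\alpha,\beta)$ up to unimodular phase factors, which do not affect the frame property. Since $\cF$ is unitary, $\cG(h,\alpha,\beta)$ is a frame \fif\ $\cG(g,\beta,\alpha)$ is a frame; that is, $(\alpha,\beta)\in\cF(h)$ \fif\ $(\beta,\alpha)\in\cF(g)$. By Theorem~\ref{main1} one has $\cF(g)=\{(\alpha,\beta):\alpha\beta<1\}$, and since $\alpha\beta<1\Llra\beta\alpha<1$, this yields $\cF(h)=\{(\alpha,\beta):\alpha\beta<1\}$, establishing the first assertion.

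For the dual window I would apply Theorem~\ref{main1} to $g$ with the swapped parameters $(\beta,\alpha)$ (legitimate, as $\beta\alpha<1$), obtaining a piecewise continuous, compactly supported dual window $\gamma$ for $\cG(g,\beta,\alpha)$, supported in $[-\tfrac{rm}{\alpha}-\beta,\tfrac{rn}{\alpha}+\beta]$ with $r=\lfloor 1/(1-\alpha\beta)\rfloor$ (the same $r$, since $\alpha\beta=\beta\alpha$). Because a unitary operator sends a dual frame pair to a dual frame pair, $\theta:=\cF\gamma$ is a dual window for $\cG(h,\alpha,\beta)$. As $\gamma$ has compact support, $\theta$ is bandlimited, and $\hat\theta=\cF^2\gamma=\gamma(-\,\cdot\,)$, so $\supp\hat\theta$ is the reflection of $\supp\gamma$; reading off the endpoints gives the compactly supported spectrum claimed in the corollary.

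I expect the only genuine obstacle to be bookkeeping rather than conceptual. One must track the Fourier-transform convention, the parameter swap $(\alpha,\beta)\leftrightarrow(\beta,\alpha)$, the unimodular phases in the intertwining relation, and \emph{especially} the reflection $\cF^2\gamma=\gamma(-\,\cdot\,)$, which interchanges the roles of $m$ and $n$ at the two endpoints of the spectrum of $\theta$; this is the single place where a sign in the support interval can flip and so deserves care. It also remains to note explicitly that the piecewise continuity of $\gamma$ is what guarantees $\theta\in L^2$ with the asserted bandlimited regularity, and that the phase factors, being unimodular and acting diagonally across the lattice, leave both the frame bounds and the biorthogonality relation \eqref{eq:3} intact.
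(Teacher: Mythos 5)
Your proposal is correct and is essentially the paper's own argument: the paper justifies this corollary with the single line ``By taking a Fourier transform, we obtain the following corollary,'' i.e.\ precisely your route of applying Theorem~\ref{main1} to $g=\cF^{-1}h$ with the parameters $(\beta,\alpha)$, using the unitary intertwining $\cG(g,\beta,\alpha)\leftrightarrow\cG(\hat g,\alpha,\beta)$, and transporting the compactly supported dual window. If anything, you are more careful than the paper: the reflection $\cF^2\gamma=\gamma(-\,\cdot\,)$ that you flag does interchange $m$ and $n$ at the endpoints of $\supp\hat\theta$ relative to the interval printed in the corollary, a harmless relabelling the paper silently ignores.
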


\rem\ We have excluded the case  $M=1$ for the formulation of the
theorem, because it  corresponds to the one-sided exponential $g(t) =
e^{-t} \chi _{\bR _+ }(t)$. This function is discontinuous and
therefore is not subject to the Balian-Low principle. Instead, we
have only $\cF (g) =  \{ (\alpha , \beta ) \in \bR _+^2 : \alpha
\beta \leq 1\}$.

\begin{proof}[Proof of Theorem~\ref{main1}]
  Since $g\in L^1(\bR )$ is totally positive, it decays
  exponentially, and since $M\geq 2$, its Fourier transform
  $\hat{g}(\xi )= C e^{2\pi i \delta \xi}
  \prod_{\nu=1}^{M}(1+2\pi i \delta_\nu \xi )\inv $
  decays at least like $ |\hat{g}(\xi )| \leq \tilde C(1+\xi ^2)\inv $. In
  particular, $g$ is continuous.  As a
  consequence, the assumptions of the Balian-Low theorem are
  satisfied~\cite{BHW95,book} and $\cF (g) \subseteq \{ (\alpha , \beta ) \in \bR _+^2 : \alpha
\beta < 1\}$.

To  prove that $\gabsy $ is a frame for $\alpha \beta <1$, we will
construct  a family of uniformly bounded  left-inverses for the
pre-Gramians $P(x)$ of $g$ and then use
Lemma~\ref{l:pre}.

Fix  $x\in [0,\alpha ]$ and consider  the sequences
$x_j = x+\alpha j$ and $y_k = k/\beta $, $j,k\in \bZ $. We first check condition
$(C_r)$.
By our assumption, we have $\alpha  <1/\beta$
and every interval $(k/\beta , (k+1)/\beta )$
contains at least one point $x+\alpha j$. Every interval   $(k/\beta ,
(k+r)/\beta )$, with $r\in\bN$,
 contains at least $r+1$ points $x+\alpha j$, if
$r/\beta > (r+1)\alpha$, i.e., we have
$$
r > \frac{\alpha \beta }{1-\alpha\beta},\quad\text{or equivalently}\quad
r\ge \left\lfloor \frac{1}{1-\alpha\beta} \right\rfloor\, .
$$
Consequently, condition $(C_r)$ is satisfied with $r = \lfloor \frac{1 }{1-\alpha\beta} \rfloor $.
 By  Theorem~\ref{main0}, each  pre-Gramian
$P(x)$ with entries $g(x+\alpha j -k/\beta )$
possesses a left-inverse $\Gamma (x)$.

To apply Lemma~\ref{l:pre1}, we need to show that $\Gamma (x), x\in [0,\alpha ]$, is a uniformly
bounded set of operators on $\ell ^2(\bZ )$.

Let $P_0(x)$ be the $N\times N$-square submatrix constructed in
Steps~1 and~2. The column indices $k_1 $ and $k_2$ depend only on the
type of $g$, but not on $x$. The row indices $j_1 = j_1(x) = \min \{ j
:  x_j  > y_{k_1  +m-1} \} = \min \{ j :    x+\alpha j> (k_1  +m-1)/\beta
  \}$ and $j_2 = j_2(x)$ are locally constant in $x$. Likewise the
  indices that determine which rows $j, j_1 < j < j_2$ of $P(x)$ are
  contained in $P_0(x)$ are locally constant. Consequently, for every
  $x\in [0,\alpha ]$ there is a neighborhood $U_x$ such that indices
  used for   $P_0(y)_{jk } = g(y+\xi _j - k/\beta ) $ do not depend on
  $y\in U_x$. Since $g$ is continuous, $P_0(y)$ is continuous on
  $U_x$, and since $\det P_0(x) >0$ there exists a neighborhood $V_x
  \subseteq U_x$, such that $\det P_0(y) \geq \det P_0(x)/2$ for $y
  \in V_x$.

We now cover $[0,\alpha ]$ with finitely many
neighborhoods $V_{x_q}$ and obtain that $\det P_0(y) \geq \min _q \det
P_0(x_q)/2 = \delta >0$ for all $y\in [0,\alpha ]$. Since   each entry
of the inverse  matrix $P_0(y)\inv$  can be calculated by Cramer's
rule, these entries must be  bounded by $C \det P_0(y)\inv \leq
C\delta \inv  $ with a constant $C$
depending only on $\|g\|_\infty $ and the dimension $N$ of
$P_0(y)$.


By construction (Step~5), the zero-th row $\gamma (x)=(\gamma_{0,j}(x))_{j\in\bZ} $
 of the left-inverse $\Gamma (x) $
contains at most $N\leq (r+1)M $ non-zero entries, namely those of  the row
of  $P_0(x)\inv $ corresponding to $y_0 = 0$.  We have thus
constructed a vector-valued function $x\to \gamma (x)$ from
$[0,\alpha ] \to \ell ^\infty (\bZ )$ with the following properties:
\begin{itemize}
\item[(i)] $\gamma (x)$ is piecewise continuous,

\item[(ii)]  $\mathrm{card}\, (\supp \, \gamma (x) ) \leq (r+1)M$,
  where according to~\eqref{eq:ch24}
$$\supp \,
\gamma (x) =\{j: \gamma _{0,j}(x)\not\equiv 0\}
 \subseteq \{ j : \frac{-rm}{\beta} \leq x+\alpha j \leq
\frac{rn}{\beta }\} \subseteq \{ j : \frac{-rm}{\alpha \beta}-1 \leq  j \leq
\frac{rn}{\alpha \beta }\} \, ,
$$

\item[(iii)]  and  $\sup _{x\in [0,\alpha ]}  \|\gamma (x) \|_\infty =
C <\infty $.
\end{itemize}

Consequently, the  dual window $ \gamma (x) =  \beta  \sum _{j\in
  \zd } \overline{\gamma } _{0,j}(x) \chi _{[0,\alpha
    )}(x-\alpha j) $ corresponding to $\Gamma (x)$  by
Lemma~\ref{l:pre1},    has compact support on the interval
$[-\tfrac{rm}{\beta}-\alpha,\tfrac{rn}{\beta}+\alpha] $,
is piecewise continuous,  and
is bounded. In particular, it satisfies the Bessel property
(see~\cite{walnut92} or \cite[Cor.~6.2.3]{book}).

We have constructed a dual window for $\gabsy $ satisfying the Bessel
property. By Theorem~\ref{charone} and Lemma~\ref{l:pre},
 $\gabsy $ is a Gabor frame.
\end{proof}

\subsection{Remarks and Conjectures}

In the proof of Theorem~\ref{main1} we have constructed  a compactly
supported dual window $\gamma $ for  $\gabsy $. This construction is
explicit and can be realized numerically, because it requires only the
inversion of finite matrices.  To determine the
values $\gamma (x+j\alpha )$, one has to solve the  linear $N\times N$
system $P_0(x) \gamma (x) = e$ for a  vector $e $ of the standard
basis of $ \bR ^N$.

We observe that the canonical dual window (provided by standard frame
theory) has better smoothness properties. The regularity theory for
the Gabor frame operator implies that the canonical dual window
$\gamma ^\circ $ decays exponentially and its Fourier transform
$\widehat{\gamma ^\circ}(\xi )$ decays like $\cO ( |\xi | ^{-M })$,
where $M$ is the type of $g$. See~\cite{delprete,book,GL04,strohmer00}.

\vs

Theorem~\ref{main1} raises many new questions. Theorem \ref{main1}
suggests the  natural
conjecture that the frame set of  \emph{every }
totally positive continuous function $g$  in $L^1(
\bR )$   is $\cF (g) = \{ (\alpha , \beta ) \in \bR _+^2 :
\alpha \beta < 1\}$. Our proof is tailored to totally positive
functions of finite type, most likely the proof of the conjecture will require a
different method.

In a larger context one may speculate about the set $\cM$ of functions such
that the frame set is exactly $\cF (g) = \{ (\alpha , \beta ) \in \bR _+^2 :
\alpha \beta < 1\}$. In other words, when is the necessary density
condition $\alpha \beta <1$  also sufficient  for $\cG (g,\alpha ,
\beta )$ to be a frame? The invariance properties of Gabor frames imply
that the class  $\cM$ must be invariant under \tfs s, dilations,
involution,  and the
\ft . Furthermore, if $\cF (g) = \{ (\alpha , \beta ) \in \bR _+^2 :
\alpha \beta < 1\}$, then both $g$ and $\hat{g}$ must have infinite
support.

A general method for constructing functions in $\cM$  can be extracted
from ~\cite{JS02}.  We write $\hat{c}(\xi ) = \sum _{k\in \bZ } c_k
e^{2\pi i k\xi }$ for the Fourier series of a sequence $(c_k)$ and
then define, for a given function $g_0\in L^2(\bR )$,
$$
C_{g_0} = \{ f \in L^2(\bR ): f = \sum _{k,l\in \bZ } c_k d_l T_k M_l
g_0, c, d \in \ell ^1(\bZ ),  \inf _{\xi } (|\hat{c}(\xi ) \,
\hat{d}(\xi )| >0  \} \, \, .
$$

\begin{lemma}\label{trick}
  Let $g_0$ be a totally positive function of finite type $M\geq
  2$. If $g \in C_{g_0}$, then the frame set of $g$ is  $\cF (g) = \{ (\alpha , \beta ) \in \bR _+^2 :
\alpha \beta < 1\}$.
\end{lemma}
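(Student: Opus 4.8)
The plan is to realize the passage $g_0\mapsto g$ through a single bounded, boundedly invertible operator and then to show that this operator preserves the Gabor frame property on every lattice $\alpha\bZ\times\beta\bZ$. Write $A=\sum_{k,l\in\bZ}c_kd_l\,T_kM_l$, so that $g=Ag_0$. Since integer \tfs s commute, $T_kM_l=M_lT_k$, the operator factors as $A=CD=DC$ with $C=\sum_k c_kT_k$ and $D=\sum_l d_lM_l$: here $C$ is the Fourier multiplier with $1$-periodic symbol $\hat c(-\xi)$ and $D$ is multiplication by the $1$-periodic function $\theta(t)=\hat d(t)$. Because $c,d\in\ell^1(\bZ)$, the symbols $\hat c,\hat d$ lie in the Wiener algebra, and the hypothesis $\inf_\xi|\hat c(\xi)\hat d(\xi)|>0$ forces each of $|\hat c|,|\hat d|$ to be bounded away from $0$ and $\infty$. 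Hence $C$, $D$, and therefore $A$ are bounded and invertible on $\lr$; by \wl\ the functions $1/\hat c$ and $1/\hat d$ again have absolutely convergent Fourier series, so $A\inv$ is of the same form $\sum c'_kd'_l\,T_kM_l$ with $c',d'\in\ell^1(\bZ)$. In particular $A$ and $A\inv$ commute with every integer \tfs\ $T_kM_l$ and map $M^1$ boundedly into itself.

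Two consequences are immediate. First, $g=Ag_0\in M^1$, since the defining series converges absolutely in $M^1$ (each $T_kM_lg_0$ has the same $M^1$-norm and $\sum|c_kd_l|=\|c\|_1\|d\|_1<\infty$); thus $g$ is continuous and the \blt\ applies, giving $\cF(g)\subseteq\{(\alpha,\beta):\alpha\beta<1\}$. Second, by Theorem~\ref{main1} we already know that $\gabsy$ with window $g_0$ is a frame for every $\alpha\beta<1$. It therefore remains to prove the lower bound: for each $\alpha\beta<1$ the system $\{M_{l\beta}T_{k\alpha}g\}$ is again a frame. The core of the argument is thus the stability statement that $A$ sends a Gabor frame to a Gabor frame on the same lattice. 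Since $\cG(g,\alpha,\beta)=\cG(\hat g,\beta,\alpha)$ and the \ft\ interchanges the periodic multiplier $D$ with an integer-convolution of the same type, it suffices to treat one factor, say to show that multiplication by a $1$-periodic function $\theta$ with $0<\inf|\theta|\le\sup|\theta|<\infty$ preserves the frame property.

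I would establish this first for rational $\alpha,\beta$. Passing to a common refinement lattice $\tfrac1N\bZ\supseteq\alpha\bZ\cup\bZ$ and the associated Zak transform fiberizes the frame operator into finite Zibulski--Zeevi matrices; on each fiber the integer shifts act through phases, so that $A$ acts as multiplication by the bounded-above-and-below symbol $\hat c\hat d$, which there commutes with $M_{l\beta}T_{k\alpha}$. This yields the fiberwise, hence global, comparison
$$
\big(\inf_\xi|\hat c(\xi)\hat d(\xi)|\big)^2\,S_{g_0,\alpha,\beta}\ \le\ S_{g,\alpha,\beta}\ \le\ \big(\sup_\xi|\hat c(\xi)\hat d(\xi)|\big)^2\,S_{g_0,\alpha,\beta}
$$
between the two \gfrop s, so $\gabsy$ is a frame with bounds comparable to those of $g_0$. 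To remove the rationality assumption I would invoke the continuity of the optimal frame bounds in $(\alpha,\beta)$ for fixed $M^1$-windows~\cite{FK04}: by Theorem~\ref{main1} the lower bound of $g_0$ is continuous and strictly positive on the connected region $\{\alpha\beta<1\}$, hence locally bounded below on its dense rational subset; by the comparison above the same holds for $g$, and continuity then propagates strict positivity of the lower frame bound of $g$ to every $(\alpha,\beta)$ with $\alpha\beta<1$.

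The step I expect to be the main obstacle is precisely this transfer through the periodic factor for non-integer $\alpha$. The difficulty is that although $A$ commutes with the \emph{integer} \tfs s, it does \emph{not} commute with the shift $T_{\alpha k}$ of the Gabor lattice when $\alpha\notin\bZ$; consequently there is no direct conjugation $S_{g,\alpha,\beta}=A\,S_{g_0,\alpha,\beta}A^*$, and the commutation is recovered only after fiberization. One must therefore verify carefully that on the Zibulski--Zeevi fibers the integer-shift phases act as a genuine bounded and bounded-below diagonal factor rather than mixing the finitely many rows indexed by the cosets of the refinement lattice — this is where the rational structure is used and where the estimate must be made uniform in order to feed the continuity argument. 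Once the frame property is secured, the remainder is routine: $\gabsy$ automatically admits a dual window (e.g. the canonical dual, or one produced from Lemma~\ref{l:pre1} applied to the perturbed left-inverse), which together with the \blt\ bound completes the identification $\cF(g)=\{(\alpha,\beta)\in\rplus:\alpha\beta<1\}$.
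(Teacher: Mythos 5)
Your preliminaries are sound: the factorization $A=CD=DC$ with $C=\sum_k c_kT_k$ and $D=\sum_l d_lM_l$, the boundedness and invertibility of $A$ via Wiener's lemma, the absolute convergence of $\sum_{k,l}c_kd_l\,T_kM_lg_0$ in $M^1$, and the resulting Balian--Low necessity $\cF(g)\subseteq\{\alpha\beta<1\}$ are all correct, as is the reduction to the claim that each single factor preserves the frame property. The gap is exactly at the step you flag as the main obstacle, and it is fatal as written. Under the (integer) Zak transform, $A$ becomes multiplication by the $1$-periodic symbol $m(x,\xi)=\hat{d}(x)\hat{c}(-\xi)$, while $M_{\beta l}T_{\alpha k}$ becomes a unimodular phase composed with translation by $(\alpha k,\beta l)$; hence conjugation gives $M_{\beta l}T_{\alpha k}\,A\,(M_{\beta l}T_{\alpha k})\inv=$ multiplication by $m(x-\alpha k,\xi-\beta l)$. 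This equals $A$ only when the symbol is invariant under these translations, i.e.\ essentially only when $\alpha,\beta\in\bZ$. Rationality of $\alpha,\beta$ makes the set of translated symbols \emph{finite}, not trivial, so even on the Zibulski--Zeevi fibers $A$ does \emph{not} commute with the lattice shifts, and the identity $S_{g,\alpha,\beta}=AS_{g_0,\alpha,\beta}A^*$ underlying your two-sided comparison of frame operators is false. Concretely, the Zibulski--Zeevi matrix of $g$ is the \emph{entrywise} (Hadamard) product of that of $g_0$ with a matrix of samples of $\hat{d}$ and $\hat{c}$ at points depending jointly on the row and the column index; such a multiplier matrix is in general not of rank one (not a row factor times a column factor, i.e.\ not a diagonal conjugation), and Hadamard multiplication by a matrix with entries bounded above and below in modulus does not preserve lower singular-value bounds: the Hadamard square of the invertible matrix $\left(\begin{smallmatrix}1&1\\1&-1\end{smallmatrix}\right)$ is singular. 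So the lower bound $(\inf|\hat{c}\hat{d}|)^2S_{g_0,\alpha,\beta}\le S_{g,\alpha,\beta}$ --- which is the entire content of the sufficiency direction --- is unproven; only the upper (Bessel) bound survives, by expanding $g=\sum_{k,l}c_kd_lT_kM_lg_0$ and using $\ell^1$-summability. A secondary problem is that \cite{FK04} gives openness/stability of the frame property for $M^1$ windows, not continuity of the optimal frame bounds; that step could be repaired given uniform bounds at rational points, but those uniform bounds are exactly what the broken rational step was supposed to supply.

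For comparison, the paper does not attempt any frame-operator or fiberwise comparison at all: its proof of Lemma~\ref{trick} is a reduction to Janssen and Strohmer \cite{JS02}. Membership $g\in C_{g_0}$ is read as the Zak-transform factorization $Zg(x,\xi)=\hat{c}(-\xi)\hat{d}(x)Zg_0(x,\xi)$, and this factorization is then fed into the Janssen--Strohmer argument, which combines it with the Ron--Shen criterion (Theorem~\ref{charone}) rather than with any commutation of $A$ past the lattice shifts --- precisely because, as your own analysis shows, no such commutation is available for non-integer $\alpha,\beta$. If you want to salvage your route, the honest task is the one you isolated: prove that multiplication by a $1$-periodic function bounded above and below preserves the Gabor frame property on $\alpha\bZ\times\beta\bZ$. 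This is immediate when $\alpha\in\bZ$ or $1/\beta\in\bZ$ (then the periodic factor really does act as a bounded, boundedly invertible diagonal on the rows, respectively columns, of the pre-Gramian $P(x)$ of Lemma~\ref{l:pre}), and that special case is a useful test that any proposed general argument must pass; your fiberwise-commutation claim does not.
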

This trick was used by Janssen and Strohmer in\cite{JS02}.  They
showed that the hyperbolic secant $g_1(t)=  (e^t+e^{-t})\inv$ belongs to the
set $C_{\vf }$ for the Gaussian window $\vf (t) = e^{-\pi t^2}$ and
then  concluded that $\cF (g_1) = \{ (\alpha , \beta ) \in \bR _+^2 :
\alpha \beta < 1\}$. The general argument is identical.

Each class $C_g$ is completely determined by the zeros of the Zak
transform of $g$. Let $Zg(x,\xi ) = \sum _{k\in \bZ } g(t-k) e^{2\pi i
  k\xi }$ be the Zak transform of $g$. Since $Z(T_k M_l g)(x,\xi ) =
e^{2\pi i (lx-k\xi)} Zg(x,\xi )$, every $g\in C_{g_0}$ has a Zak transform of
the form
$$ Zg(x,\xi ) = \hat{c}(-\xi ) \hat{d}(x) Zg_{0}(x,\xi) \, .
$$
The definition of $C_{g_0}$ implies that $Zg$ and $Zg_{0}$ have the
same zero set. If $g_0$ and $h$ are two totally positive functions,
then the zero sets of $Zg_0$ and $Zh$ are different in general,
therefore Lemma~\ref{trick} leads to  distinct sets $C_{g_0}$.  The
zeros of the Zak transform seems to be some kind of invariant for the
Gabor frame problem, but  their
deeper significance  is still mysterious.

\section{Sampling Theorems}

In this section we exploit the connection between Gabor frames and
sampling theorems and   prove new and sharp  sampling theorems for shift-invariant
spaces. Originally shift-invariant spaces  were used as a substitute for bandlimited
functions and were defined as the span of integer translates of a given
 function $g$. We refer to the survey ~\cite{AG01} for the theory of  sampling in
 shift-invariant spaces.   We will deal with a slightly more
 general class of spaces that are generated by arbitrary shifts.

Let $Y=(y_k)_{k\in\bZ}$ be  a strictly increasing sequence and
consider the quasi shift-invariant space
$$
V_Y(g) = \{ f \in L^2(\bR ): f= \sum _{k\in \bZ } c_k g(.-y_k) \} \, .
$$

We require
that the sequence $Y=(y_k)$ of shift parameters  satisfies the
conditions
\begin{equation}\label{eq:qy}
  0< q_Y=\inf_k(y_{k+1}-y_k)\le \sup_k(y_{k+1}-y_k)=Q_Y<\infty.
\end{equation}
Such sequences are  called quasi-uniform or uniformly discrete. 
The numbers $Q_Y,q_Y>0$ are  the mesh-norm and the
separation distance of $Y$. 

For the norm equivalence $\|f\|_2 \asymp \|c\|_2$ for $f\in V_Y(g)$ we
need that $\{g(.-y_k: y_k \in Y\}$ is a Riesz basis for $V_Y(g)$.

\begin{lemma} \label{rbsi}
Let  $g$ be an arbitrary totally positive function.

(i) If   $Y=h \bZ$ with $h>0$, then
$\{ g( .  - h k ), k\in \bZ \}$ is a Riesz basis for $V_Y(g)$.

(ii) If $Y$ is quasi-uniform, then $\{g (. - y_k) : k\in \bZ \}$ is a Riesz basis for $V_Y(g)$.
\end{lemma}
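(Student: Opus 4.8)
The plan is to prove that the integer (or quasi-uniform) translates of a totally positive function $g$ form a Riesz basis of $V_Y(g)$, i.e.\ that $\|f\|_2 \asymp \|c\|_2$ whenever $f = \sum_k c_k g(\cdot - y_k)$. The natural strategy is to realize the lower and upper Riesz bounds as the endpoints of the spectrum of the (bi-infinite) Gram-type operator, and then to invoke the machinery already built in Theorem~\ref{main0}. First I would observe that the synthesis map $c \mapsto \sum_k c_k g(\cdot - y_k)$ is built from the bi-infinite matrix $P = [g(x_j - y_k)]_{j,k}$ evaluated on appropriate sampling points; the upper bound ($g$ integrable with exponential decay, hence in the Wiener amalgam space $W(\ell^1)$) is the routine part and follows from a standard Schur/amalgam estimate, so that the upper Riesz constant $B < \infty$ is immediate. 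The real content is the lower bound.

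For the lower bound, the plan is to reduce the Riesz-basis property to the existence of a uniformly bounded left-inverse of an associated pre-Gramian, exactly as in Lemma~\ref{l:pre}. Concretely, for part~(i) with $Y = h\bZ$, choosing a suitably dense auxiliary sampling set $X = (x_j)$ (for instance an integer-scaled shift of $h\bZ$) so that condition $(C_r)$ holds for the pair $(X, Y)$, Theorem~\ref{main0} furnishes an algebraic left-inverse $\Gamma$ with $\Gamma P = I$ and with the band/decay structure \eqref{eq:ch24}. The key step is then to upgrade this \emph{algebraic} left-inverse to a \emph{bounded} one on $\ell^2(\bZ)$: because each row $\gamma_k$ has at most $N = (r+1)M$ nonzero entries (a number independent of $k$) and, by the translation-invariance of the lattice $h\bZ$, the rows are shifts of one another, the entries are uniformly bounded, so $\Gamma$ is a banded matrix with uniformly bounded entries and hence bounded on $\ell^2$. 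From $\|c\|_2 = \|\Gamma P c\|_2 \le \|\Gamma\|_{op}\,\|Pc\|_2$ one reads off the lower Riesz bound $A = \|\Gamma\|_{op}^{-2} > 0$. Finally one transfers the bound on $\|Pc\|_2$ (a sampled $\ell^2$ norm) to $\|f\|_2$ using that the sampling set $X$ is itself a set of sampling for $V_Y(g)$, or more directly by recognizing $P^*P$ as the Gram matrix and concluding $\sigma(P^*P) \subseteq [A,B]$.

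For part~(ii), the quasi-uniform case, I would follow the identical scheme: conditions \eqref{eq:qy} guarantee that one can still choose an auxiliary set $X$ making $(C_r)$ hold (with $r$ now depending on the ratio $Q_Y/q_Y$ rather than on a single lattice constant), so Theorem~\ref{main0} again produces a left-inverse with the support property \eqref{eq:ch24}. The argument is cleaner than in the Gabor setting of Theorem~\ref{main1} because here there is no averaging over a parameter $x$: a single bi-infinite matrix $P$ suffices, and boundedness of $\Gamma$ follows from the bandedness in \eqref{eq:ch24} together with a uniform entrywise bound.

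The main obstacle will be establishing the \emph{uniform} boundedness of the left-inverse $\Gamma$ on $\ell^2(\bZ)$ in the quasi-uniform case. In part~(i) the lattice structure makes all rows translates of one another, so a uniform entry bound is automatic; but for general quasi-uniform $Y$ the finite submatrices $P_k$ inverted in the construction vary with $k$, and one must rule out their determinants $\det P_k$ degenerating to zero along the sequence. The way around this is to note that, because $(y_k)$ has bounded mesh-norm and separation distance, the point configurations entering $P_k$ live in a compact family up to translation; by the strict positivity in Theorem~\ref{strictpos} each $\det P_k > 0$, and a compactness argument (analogous to the covering of $[0,\alpha]$ in the proof of Theorem~\ref{main1}) yields a uniform lower bound $\inf_k \det P_k = \delta > 0$. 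Cramer's rule together with the boundedness of $\|g\|_\infty$ then bounds the entries of each $P_k^{-1}$ uniformly, and the bandedness from \eqref{eq:ch24} converts this into a uniform operator-norm bound on $\Gamma$, completing the lower Riesz estimate.
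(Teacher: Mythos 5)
Your proposal runs in the wrong logical direction: it tries to deduce the Riesz basis property from the pre-Gramian/left-inverse machinery, but that machinery presupposes the Riesz basis property. Concretely, a bounded left-inverse $\Gamma$ of $P$ only gives $\|c\|_2 \le \|\Gamma\|_{op}\,\|Pc\|_2$, and $\|Pc\|_2^2 = \sum_j |f(x_j)|^2$ is a \emph{sampled} norm, not $\|f\|_2^2$. To turn this into the lower Riesz bound $\|c\|_2 \le C\,\|f\|_2$ you must know the upper sampling inequality $\sum_j |f(x_j)|^2 \le B\,\|f\|_2^2$ on $V_Y(g)$ --- and every proof of that inequality, including Lemma~\ref{chartwo} (whose equivalence ``set of sampling $\Leftrightarrow$ bounded left-inverse'' is precisely the step you invoke when you say the transfer works ``using that $X$ is itself a set of sampling for $V_Y(g)$''), passes through the norm equivalence $\|f\|_2 \asymp \|c\|_2$ that is being proved. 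This is exactly why the paper remarks, immediately before Lemma~\ref{rbsi}, that the Riesz basis property is \emph{needed} for the sampling framework, and then proves the lemma by independent means. Your fallback, ``recognizing $P^*P$ as the Gram matrix,'' is also incorrect: the Gram matrix of $\{g(\cdot-y_k)\}$ has entries $\int_{\bR} g(t-y_k)\overline{g(t-y_l)}\,dt$, whereas $P^*P$ has the discrete sums $\sum_j g(x_j-y_k)\overline{g(x_j-y_l)}$; bounding the spectrum of the latter says nothing directly about $\|f\|_2$. (For part (i) only, the circularity could be repaired by using the whole family $P(x)$, $x\in[0,\alpha]$ with $\alpha<h$, together with the exact identity $\int_0^\alpha \|P(x)c\|_2^2\,dx = \|f\|_2^2$; but that is just Theorem~\ref{charone}(iii) plus Theorem~\ref{main1}, not a proof of the lemma as stated.)

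There is a second, independent gap of scope: Lemma~\ref{rbsi} is stated for an \emph{arbitrary} totally positive function, whereas Theorem~\ref{main0} and the whole Schoenberg--Whitney apparatus apply only to totally positive functions of \emph{finite type}. The Gaussian, for instance, is covered by the lemma but inaccessible to your argument. The paper's own proof avoids both problems by working entirely on the Fourier side: by Schoenberg's factorization (Theorem~\ref{thm:schoen}), $\hat g$ is continuous, has no real zeros, and decays at least like $C/|\xi|$. For (i) this makes every periodization of $|\hat g|^2$ bounded above and below, which is the classical criterion for integer translates to form a Riesz basis; for (ii) the lower bound follows from Zygmund's inequality for nonharmonic sums $\sum_k c_k e^{2\pi i y_k \xi}$ over an interval $I$ of length $|I| > (1+\delta)/q_Y$, combined with $\inf_{\tau\in I}|\hat g(\tau)|>0$. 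That argument is short, non-circular, and uses nothing from the left-inverse construction.
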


\begin{proof}
(i)   Since $\hat{g}$ is continuous, does not have any
  real zeros, and $\hat{g}(\xi)$ decays at least like $C/|\xi|$, 
  every periodization of
  $|\hat{g}|^2$ is bounded above and below. This property is
  equivalent to the Riesz basis property, e.g.~\cite[Thm.~7.2.3]{chr03}.

Of course, (i) also follows from (ii).

(ii) For the general case we use  Zygmund's
inequality~\cite[Thm.~9.1]{zygmund}: If $I$ is an interval of length
$|I|> \tfrac{1+\delta }{q_Y}$, then
$$
\int _{I} \big|\sum _{k} c_k e^{2\pi i y_k \xi }\big|^2 \, d\xi \geq A_\delta
|I| \|c\|_2^2
$$
for a constant depending only on $\delta >0$.

If $ f = \sum _k c_k g(.- y_k)$, then
\begin{align*}
  \|f\|_2^2 &= \|\hat{f}\|_2^2 = \int _{\bR } \big|\sum _k  c_k e^{-2\pi i
    y_k \tau } \big|^2 \, |\hat{g} (\tau) |^2\, d\tau \\
&\geq \inf_{\tau\in I} |\hat{g} (\tau) |^2 \, \int _I   \big|\sum _k  c_k e^{-2\pi i
    y_k \tau } \big|^2 \, d\xi \\
&\geq C |I| A_\delta \|c\|_2^2 \, .
\end{align*}
Here $\inf_{\tau\in I} |\hat{g} (\tau) |^2 >0$, because $\hat{g}$ does not have
any real zeros by Theorem~\ref{thm:schoen}.
\end{proof}

We are interested to derive sampling theorems for generalized
shift-invariant spaces that are generated by a totally positive
function $g$.  Our goal is  to
construct  strictly increasing sequences  $X = (x_j)$ that yield a  sampling
inequality
\begin{equation}
  \label{eq:c89}
  A \|f \|_2 ^2 \leq \sum _{j\in \bZ } |f(x_j)|^2 \leq B \|f\|_2^2
  \qquad \text{ for all } f\in V_Y(g) \,
\end{equation}
for some  constants $A,B >0$ independent of $f$. Following
Landau~\cite{landau67},   a set $X \subset
\bR $ that  satisfies the norm equivalence \eqref{eq:c89}  is called a
set of (stable) sampling for $V_Y(g)$. Except for bandlimited
functions and  B-spline generators only qualitative results are known
about sets of sampling in shift-invariant spaces.

We first give an equivalent condition for sets of sampling in
$V_Y(g)$.
As in Lemma~\ref{l:pre} we obtain the following characterization of
sets of sampling in $V_Y(g)$.

\begin{lemma} \label{chartwo}
Let $g\in L^2(\bR)$, and let $Y=(y_k)_{k\in \bZ}\subset \bR$ be a strictly increasing sequence.
Then a set $\{x_j\}\subset \bR$ is a set of
sampling for $V_Y(g)$, \fif\ the pre-Gramian $P$ with entries $p_{jk}=g(x_j -
y_k )$ possesses a left-inverse $\Gamma  $ that is bounded on
$\ell ^2 (\bZ )$.
\end{lemma}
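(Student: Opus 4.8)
The plan is to mirror the proof of Lemma~\ref{l:pre}, transferring the sampling inequality on $V_Y(g)$ into a norm equivalence for the pre-Gramian acting on coefficient sequences. The bridge is the synthesis operator $T\colon \ell^2(\bZ)\to V_Y(g)$, $Tc=\sum_{k\in\bZ}c_k\,g(\cdot-y_k)$. Since $\{g(\cdot-y_k)\}$ is a Riesz basis for $V_Y(g)$ (this is exactly what Lemma~\ref{rbsi} provides in the cases of interest), $T$ is a topological isomorphism, so $\|Tc\|_2\asymp\|c\|_2$. First I would record the pointwise identity that does all the work: for $f=Tc$ and any sampling point $x_j$,
\[
  f(x_j)=\sum_{k\in\bZ}c_k\,g(x_j-y_k)=(Pc)_j,
\]
whence $\sum_j|f(x_j)|^2=\|Pc\|_2^2$. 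Here point evaluation is legitimate because the generator is continuous with enough decay that $V_Y(g)$ embeds into a space of continuous functions.

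For the implication that a set of sampling yields a bounded left-inverse, I would start from $A\|f\|_2^2\le\sum_j|f(x_j)|^2\le B\|f\|_2^2$, substitute $f=Tc$, and invoke $\|f\|_2\asymp\|c\|_2$ to obtain $\|Pc\|_2^2\asymp\|c\|_2^2$ for all $c\in\ell^2(\bZ)$. The upper estimate says $P$ is bounded; the lower estimate says $P$ is bounded below, hence injective with closed range. Exactly as in Lemma~\ref{l:pre}, the Gramian $P^*P$ is then bounded and boundedly invertible, since $\langle P^*Pc,c\rangle=\|Pc\|_2^2\asymp\|c\|_2^2$, and $\Gamma:=(P^*P)^{-1}P^*$ is the desired bounded left-inverse: $\Gamma P=\Id$ with $\|\Gamma\|_{op}\le\|(P^*P)^{-1}\|_{op}\,\|P\|_{op}<\infty$.

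For the converse, I would assume $P$ is bounded and admits a bounded left-inverse $\Gamma$ with $\Gamma P=\Id$. Then, for every $c\in\ell^2(\bZ)$,
\[
  \|c\|_2=\|\Gamma Pc\|_2\le\|\Gamma\|_{op}\,\|Pc\|_2,
  \qquad
  \|Pc\|_2\le\|P\|_{op}\,\|c\|_2,
\]
so that $\|Pc\|_2^2\asymp\|c\|_2^2$. Reading this back through $f=Tc$ and $\|f\|_2\asymp\|c\|_2$ gives $\sum_j|f(x_j)|^2=\|Pc\|_2^2\asymp\|f\|_2^2$, which is precisely the sampling inequality \eqref{eq:c89}.

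The genuinely delicate points—where I expect the work to concentrate—are the two facts that license this translation, not the operator-theoretic equivalence itself. First, one must know that $\{g(\cdot-y_k)\}$ is a Riesz basis for $V_Y(g)$, so that $\|f\|_2\asymp\|c\|_2$; for a totally positive $g$ with quasi-uniform $Y$ this is supplied by Lemma~\ref{rbsi}. Second, the upper sampling bound is equivalent to the boundedness of $P$ (the Bessel property), which must be available for the characterization to be clean; for totally positive functions of finite type this is guaranteed by Theorem~\ref{main0}. Granted these two inputs, the argument is a verbatim adaptation of the equivalence (i)$\Leftrightarrow$(v) in Lemma~\ref{l:pre}.
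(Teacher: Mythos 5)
Your proposal is correct and takes essentially the same route the paper intends: the paper gives no separate proof of Lemma~\ref{chartwo}, saying only that it is obtained ``as in Lemma~\ref{l:pre}'', and your argument---transferring the sampling inequality through the norm equivalence $\|f\|_2\asymp\|c\|_2$ to get $\|Pc\|_2^2\asymp\|c\|_2^2$, then taking $\Gamma=(P^*P)^{-1}P^*$ in one direction and composing $\Gamma P=\mathrm{Id}$ in the other---is precisely that adaptation of the proof of Lemma~\ref{l:pre}. You also rightly flag the two hypotheses the terse statement leaves implicit, namely the Riesz basis property of $\{g(\cdot-y_k)\}$ (Lemma~\ref{rbsi}) and the boundedness of $P$ needed for the upper sampling bound in the converse direction.
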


The case of uniform sampling in shift-invariant spaces is completely
settled by the results in Section~\ref{3}.

\begin{cor}
  Let $g$ be a totally positive function of finite type $M\geq 2$ and
  $Y= h \bZ $. If $\alpha  < h $ and $x\in \bR $ is
  arbitrary,  then the set $x+\alpha \bZ $ is a set of sampling for
  $V_{Y}(g)$.  More precisely, there exist positive constants $A,B$
  independent of $x$, such that
$$
  A \|f \|_2 ^2 \leq \sum _{j\in \bZ } |f(x +\alpha j)|^2 \leq B \|f\|_2^2
  \qquad \text{ for all } f\in V_Y(g) \, .
$$
\end{cor}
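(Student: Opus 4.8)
The plan is to recognize this corollary as a direct consequence of the machinery already assembled for Theorem~\ref{main1}, specialized to the choice $\beta = 1/h$, together with the sampling characterization of Lemma~\ref{chartwo}.

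First I would reduce the sampling inequality to a statement about the pre-Gramian. Since $g$ is totally positive of finite type, its Fourier transform has no real zeros and decays like $C/|\xi|$, so by Lemma~\ref{rbsi}(i) the translates $\{g(\cdot - hk)\}_{k\in\bZ}$ form a Riesz basis for $V_Y(g)$; hence every $f \in V_Y(g)$ is written uniquely as $f = \sum_{k\in\bZ} c_k\, g(\cdot - hk)$ with $\|f\|_2 \asymp \|c\|_2$. Evaluating at the sampling nodes gives
\[
  f(x+\alpha j) = \sum_{k\in\bZ} c_k\, g(x+\alpha j - hk) = (P(x)c)_j,
\]
where $P(x)$ denotes the pre-Gramian with entries $P(x)_{jk} = g(x+\alpha j - hk)$. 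Thus $\sum_j |f(x+\alpha j)|^2 = \|P(x)c\|_2^2$, and via the fixed Riesz bounds the desired two-sided estimate is equivalent to $\|P(x)c\|_2^2 \asymp \|c\|_2^2$. By Lemma~\ref{chartwo} this holds precisely when $P(x)$ is bounded and admits a bounded left-inverse on $\ell^2(\bZ)$.

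Next I would invoke the construction from the proof of Theorem~\ref{main1} verbatim. Setting $\beta := 1/h$, the shift condition $\alpha < h$ becomes $\alpha\beta < 1$, the nodes $y_k = hk$ become $k/\beta$, and the pre-Gramians $P(x)$ are exactly those analyzed there. In that proof condition $(C_r)$ was verified with $r = \lfloor \tfrac{1}{1-\alpha\beta}\rfloor = \lfloor \tfrac{h}{h-\alpha}\rfloor$, Theorem~\ref{main0} produced banded algebraic left-inverses $\Gamma(x)$ with the support property \eqref{eq:ch24}, and a Cramer's-rule estimate --- the determinants of the square submatrices $P_0(x)$ are bounded below uniformly on $[0,\alpha]$, so each entry of $\Gamma(x)$ is uniformly bounded, whence the finite bandwidth from \eqref{eq:ch24} yields $\sup_{x\in[0,\alpha]}\|\Gamma(x)\|_{op} \leq C < \infty$ by a Schur test. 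This is all I need: for each fixed $x$, Theorem~\ref{main0} gives boundedness of $P(x)$ (the upper Bessel bound) and $\Gamma(x)$ is the required bounded left-inverse, so by Lemma~\ref{chartwo} the set $x + \alpha\bZ$ is a set of sampling for $V_Y(g)$.

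Finally, for the uniformity of $A,B$ in $x$ I would pass from $[0,\alpha]$ to all of $\bR$ by periodicity: since $P(x+\alpha)_{jk} = g(x+\alpha(j+1)-hk) = P(x)_{j+1,k}$, the matrix $P(x+\alpha)$ is obtained from $P(x)$ by a unitary shift of the row index, so both $\|P(x)\|_{op}$ and $\|\Gamma(x)\|_{op}$ are $\alpha$-periodic in $x$. Hence the upper bound $B$ and lower bound $A$ (combining $\sup_x\|P(x)\|_{op}^2$ and $\sup_x\|\Gamma(x)\|_{op}^2$ with the fixed Riesz constants) are finite and independent of $x$. The only genuinely nontrivial ingredient --- promoting the algebraic left-inverse of Theorem~\ref{main0} to a uniformly bounded operator --- is already supplied by the proof of Theorem~\ref{main1}, so beyond this translation and bookkeeping the corollary requires essentially no new work.
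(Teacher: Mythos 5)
Your reduction is exactly the paper's: by Lemma~\ref{rbsi} every $f\in V_Y(g)$ satisfies $\|f\|_2\asymp\|c\|_2$, and $\sum_j|f(x+\alpha j)|^2=\|P(x)c\|_2^2$ with $\beta=1/h$, so the sampling inequality is equivalent to a two-sided pre-Gramian estimate. Where you diverge is in how that estimate is obtained. The paper simply observes that Theorem~\ref{main1} already established the frame property of $\cG(g,\alpha,1/h)$ and cites the equivalence (i)$\Leftrightarrow$(iii) of Theorem~\ref{charone}, which hands over both inequalities in \eqref{eq:4} with constants uniform in $x$; nothing further is needed. You instead re-run the construction (Theorem~\ref{main0} plus the compactness/Cramer argument) and pass through Lemma~\ref{chartwo}, and this re-derivation creates two obligations that your write-up does not discharge.

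First, the upper bound: you get boundedness of $P(x)$ for each fixed $x$ from Theorem~\ref{main0}, then argue that $\|P(x)\|_{op}$ is $\alpha$-periodic, ``hence'' $B$ is finite and independent of $x$. That inference is invalid: a pointwise finite, $\alpha$-periodic function of $x$ need not be bounded on the period. You need $\sup_{x\in[0,\alpha]}\|P(x)\|_{op}<\infty$, which requires a separate (easy) argument --- e.g.\ a Schur test using the exponential decay of $g$, giving $\sup_t\sum_k|g(t-hk)|<\infty$ and $\sup_{s}\sum_j|g(x+\alpha j-s)|<\infty$ uniformly in $x$ --- or simply the citation of Theorem~\ref{charone} that the paper uses. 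Second, the lower bound via Lemma~\ref{chartwo} needs $\sup_x\|\Gamma(x)\|_{op}<\infty$ for the \emph{full} left-inverse, i.e.\ uniform entry bounds over all rows $k$; but the compactness argument in the proof of Theorem~\ref{main1} is carried out only for the $0$-th row (the submatrix $P_0(x)$), because the paper needs only that row to build the dual window via Lemma~\ref{l:pre1}. To cover all rows you must additionally invoke the lattice covariance: the submatrix $P_k(x)$ used for row $k$ coincides, after translation by $kh$, with a matrix $P_0(\tilde x)$ for some $\tilde x\in[0,\alpha)$, so the row-$0$ bound transfers. Both gaps are fixable by standard arguments, but as written the uniformity of $A$ and $B$ in $x$ --- the whole content of the ``more precisely'' clause --- is not established, whereas the paper's citation route gets it for free.
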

\begin{proof}
We proved  Theorem ~\ref{main1} by verifying the equivalent condition
of Theorem~\ref{charone}, namely \eqref{eq:4} stating that
\begin{equation}
  \label{eq:4a}
A\|c\|_2^2 \leq \sum _{j\in \zd } \big| \sum _{k\in \zd } c_k
g(x+j\alpha - h k )\big|^2 \leq B \|c\|_2^2  \quad
\text{ for all } \,\, x\in \bR , c\in \ell ^2(\bZ  ) \, .
\end{equation}
Since $f\in V_Y(g)$ is of the form $f = \sum _{k} c_k g(. - h k )$
and $\|f\|_2 \asymp \|c\|_2$ by Lemma~\ref{rbsi}, the inequalities \eqref{eq:4a} are
equivalent to the sampling inequality $\|f\|_2^2 \asymp \sum _{j\in
  \bZ } |f(x+\alpha j)|^2 $, and the constants are independent of $x$
by Theorem~\ref{charone}.
\end{proof}

\rem\ The condition $\alpha  <h $ is sharp. If $\alpha = h$,
then there exists $x\in \bR $, such that $x+\alpha \bZ $ is not a set
of sampling. This follows immediately from the Balian-Low theorem
~\cite{BHW95}.

\medskip

Our methods yield more general sampling theorems. On the one hand, we
study non-uniform sampling sets, and on the other hand, we may treat
quasi shift-invariant spaces.
The auxiliary characterization of Lemma~\ref{chartwo} gives a hint of
how to proceed.
If the sequences $(x_j)$ and $(y_k) $ satisfy condition
$(C_r)$ for some $r>0$, then by Theorem~\ref{main0} the pre-Gramian matrix $P$
possesses an algebraic left-inverse. To obtain  a sampling theorem, we
need to impose additional conditions on $(x_j)$ and $(y_k)$, so that this
left-inverse is  bounded on $\ell ^2$.

\bigskip

To verify the boundedness of a matrix, we will apply the following
lemma which is a direct consequence of  Schur's test,
see, e.g.,  ~\cite[Lemma~6.2.1]{book}.

\begin{lemma}\label{lem:matrix}
    Assume that $\mathbf{A}=(a_{jk})_{j,k\in\bZ}$ is a matrix with bounded
    entries $|a_{jk}| \leq C $ for $j,k\in\bZ$. Furthermore, assume that
there exists a strictly increasing sequence $(j_k)_{k\in\bZ}$
of row indices $j_k\in\bZ$  and $N\in \bN$ such that    $a_{jk} = 0$
for $|j-j_k| \ge N$.   Then
  \begin{equation}
    \label{eq:som1}
    \|\mathbf{A}\|_{\ell ^2 \to \ell ^2  } \leq (2N-1)C \,.
  \end{equation}
\end{lemma}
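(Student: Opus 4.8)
The plan is to bound $\|\mathbf{A}\|_{\ell^2\to\ell^2}$ directly via Schur's test, exploiting the banded structure encoded by the sequence $(j_k)$. First I would recall Schur's test in the form that is already available in the paper (cited as \cite[Lemma~6.2.1]{book}): if a matrix $\mathbf{A}=(a_{jk})$ satisfies $\sup_k \sum_j |a_{jk}| \le R_1$ and $\sup_j \sum_k |a_{jk}| \le R_2$, then $\|\mathbf{A}\|_{\ell^2\to\ell^2} \le \sqrt{R_1 R_2}$. So the entire task reduces to estimating the two maximal absolute row and column sums of $\mathbf{A}$ and showing that each is at most $(2N-1)C$; the stated bound then follows since $\sqrt{(2N-1)C \cdot (2N-1)C} = (2N-1)C$.

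The key combinatorial input is that the nonzero entries of $\mathbf{A}$ are confined to a band of width controlled by $N$. Fix a column index $k$. The hypothesis says $a_{jk}=0$ whenever $|j-j_k|\ge N$, so the only possibly nonzero entries in column $k$ have row index $j$ with $j_k - N < j < j_k + N$, i.e. $j \in \{j_k-N+1,\dots,j_k+N-1\}$. This is an index set of cardinality $2N-1$. Since each entry is bounded by $C$, the column sum satisfies
\[
\sum_{j\in\bZ} |a_{jk}| \le (2N-1)\,C,
\]
and this holds uniformly in $k$, giving $R_2 \le (2N-1)C$ for the column sums.

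For the row sums I would exploit that $(j_k)_{k\in\bZ}$ is \emph{strictly increasing}, which forces the map $k\mapsto j_k$ to be injective and order-preserving. Fix a row index $j$. The entry $a_{jk}$ can be nonzero only if $|j-j_k|<N$, i.e. $j-N < j_k < j+N$. Because $(j_k)$ is strictly increasing, it takes each integer value at most once, so the number of indices $k$ for which $j_k$ lands in the open interval $(j-N, j+N)$ is at most the number of integers in that interval, which is $2N-1$. Hence at most $2N-1$ terms in row $j$ are nonzero, and again $\sum_{k\in\bZ}|a_{jk}| \le (2N-1)C$, giving $R_1 \le (2N-1)C$. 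Combining the two estimates through Schur's test yields \eqref{eq:som1}.

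The main obstacle is the row-sum bound, since the banding condition is phrased per column (through $j_k$) rather than symmetrically; the crucial point is to recognize that strict monotonicity of $(j_k)$ turns the per-column localization into a per-row localization with the \emph{same} count $2N-1$, without which the argument would not be symmetric. The column-sum estimate is essentially immediate from the support hypothesis, and no further regularity on the $a_{jk}$ beyond boundedness is needed.
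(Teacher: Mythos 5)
Your proof is correct and follows essentially the same route as the paper: both estimate the maximal absolute column sum directly from the support condition, use strict monotonicity of $(j_k)$ to show each row has at most $2N-1$ nonzero entries, and conclude via Schur's test with the bound $\sqrt{(2N-1)C\cdot(2N-1)C}=(2N-1)C$. No gaps; your explicit counting of integers in the interval $(j-N,\,j+N)$ just spells out the step the paper states in one line.
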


\begin{proof} The conditions give
\[
   K_2:= \sup_{k\in\bZ} \sum_{j\in\bZ} |a_{jk}| =
   \sup_{k\in\bZ} \sum_{j=j_k-N+1}^{j_k+N-1} |a_{jk}|
   \le (2N-1)C.
\]
For the estimate of the column sums, 
we define the set
\[
   N_j=\{ k\in\bZ :  a_{jk}\ne 0\} \subseteq \{k\in\bZ :
   |j-j_k| < N\} \qquad \text{ for } j\in \bZ \, .
\]
Since $(j_k)$ is strictly increasing, $N_j$ has at most $(2N-1)$ elements, and
this gives
\[
   K_1:= \sup_{j\in\bZ} \sum_{k\in\bZ} |a_{jk}| =
   \sup_{j\in\bZ} \sum_{k\in N_j} |a_{jk}|
   \le (2N-1)C.
\]
The assertion now follows from Schur's test. 
\end{proof}

In the following we give a  sufficient condition for a set $X$ to be a
set of sampling for $V_Y(g)$.

\begin{tm} \label{sampsi}
  Let $g$ be a totally  positive  function of finite type $M\geq 2$. Let
  $Y=(y_k)_{k\in \bZ}\subset \bR$ be an increasing quasi-uniform
  sequence   with parameters $q_Y$, $Q_Y$ defined in \eqref{eq:qy}. Moreover, let $(x_j)_{j\in \bZ}\subset \bR$
  be a strictly increasing  sequence, which satisfies the following conditions:
\[
  (C_r(\epsilon))~~\left\{
  \parbox{0.8\textwidth}{\parindent0pt
  There exist $r\in \bN$, $\epsilon \in (0,q_Y/2)$, and
  a quasi-uniform subsequence $X'\subseteq X$,
  such that\\[5pt]
   (a)   ~~ every interval $(y_k+\epsilon,y_{k+1}-\epsilon)$ contains at
   least one point $x_j\in X'$;\\[5pt]
   (b)   ~~ for every $k\in \bZ$, we have $|(y_k+\epsilon,y_{k+r}-\epsilon)\cap X'|\ge r+1$.
   }\right.
\]
Then  $X$   is a set of
  sampling for $V_Y(g)$.
\end{tm}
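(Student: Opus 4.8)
The plan is to apply the characterization of Lemma~\ref{chartwo}: to prove that $X$ is a set of sampling for $V_Y(g)$ it suffices to exhibit a left-inverse of the pre-Gramian $P=[g(x_j-y_k)]_{j,k\in\bZ}$ that is bounded on $\ell^2(\bZ)$. First I would pass to the quasi-uniform subsequence $X'$. Discarding the margin $\epsilon$ in $(C_r(\epsilon))$ shows that the pair $(X',Y)$ satisfies the bare condition $(C_r)$, so Theorem~\ref{main0} produces an algebraic left-inverse $\Gamma'=[\gamma'_{k,j}]$ of the sub-pre-Gramian $P'=[g(x'_j-y_k)]$, together with the banded support property $\gamma'_{k,j}=0$ unless $y_{k-rm}\le x'_j\le y_{k+rn}$. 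Extending $\Gamma'$ by zeros over the columns indexed by $X\setminus X'$ produces a matrix $\Gamma$ for which $\Gamma P=\Gamma'P'=\mathrm{Id}$, i.e.\ a left-inverse of the full pre-Gramian $P$. Everything then reduces to showing that $\Gamma$ (equivalently $\Gamma'$) is bounded on $\ell^2(\bZ)$.

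For boundedness I would invoke the Schur estimate of Lemma~\ref{lem:matrix}, applied to the transpose $(\Gamma')^T$; its hypotheses are a uniform bound on the entries and a uniform bandwidth. The bandwidth is immediate from the support property: in the $k$-th column of $(\Gamma')^T$ the nonzero entries $\gamma'_{k,j}$ occupy rows $j$ with $x'_j\in[y_{k-rm},y_{k+rn}]$, an interval of length at most $rM\,Q_Y$; since $X'$ is quasi-uniform with separation $q_{X'}>0$, at most $N:=\lfloor rM\,Q_Y/q_{X'}\rfloor+1$ such indices $j$ occur, and this window slides monotonically with $k$, exactly the band structure Lemma~\ref{lem:matrix} requires. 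It remains to bound the entries uniformly. As in Step~5 of the proof of Theorem~\ref{main0}, the $k$-th row of $\Gamma'$ is a specific row of $P_k^{-1}$ padded with zeros, where $P_k=[g(\xi_i-\eta_l)]_{i,l=1}^{N}$ is an $N\times N$ submatrix built from selected points $\xi_1<\cdots<\xi_N$ of $X'$ and knots $\eta_1<\cdots<\eta_N$ of $Y$. By Cramer's rule the entries of $P_k^{-1}$ are dominated by $C_0/\det P_k$, where $C_0$ depends only on $N$ and $\norm{g}_\infty$, so the whole estimate hinges on a \emph{uniform lower bound} $\inf_{k\in\bZ}\det P_k>0$.

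This uniform determinant bound is the main obstacle, and it is precisely where the margin $\epsilon$ enters. Because $(C_r(\epsilon))$ selects the points of $X'$ inside the shrunken intervals $(y_k+\epsilon,y_{k+1}-\epsilon)$, every chosen $\xi_i$ stays at distance at least $\epsilon$ from every knot $y_l$; combined with $q_Y\le y_{l+1}-y_l\le Q_Y$ and $\xi_{i+1}-\xi_i\ge q_{X'}$, this forces all pairwise gaps of the configuration $\{\xi_i\}\cup\{\eta_l\}$ to be bounded below by $\min(\epsilon,q_Y,q_{X'})$ and confined to an interval of length at most $(N-1)Q_Y$. In particular the Schoenberg--Whitney inequalities $\xi_i<\eta_{i+m}$ and $\eta_i<\xi_{i+n}$ of Theorem~\ref{strictpos} hold with a margin $\ge\epsilon$ independent of $k$. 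I would then argue by compactness, mimicking the proof of Theorem~\ref{main1} but now with $k$ in place of $x$: after translating so that $\eta_1=0$, the relative-position vector $(\xi_1,\dots,\xi_N,\eta_2,\dots,\eta_N)$ ranges, as $k$ varies over $\bZ$, inside a fixed compact set $K\subset\bR^{2N-1}$ determined only by $\epsilon,q_{X'},q_Y,Q_Y$. By the uniform Schoenberg--Whitney margin, $K$ lies entirely in the open region on which $\det[g(\xi_i-\eta_l)]>0$; since $g$ is continuous (Theorem~\ref{thm:schoen}), $\det P_k$ is a continuous positive function on the compact set $K$ and therefore attains a minimum $\delta>0$. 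Hence $\abs{\gamma'_{k,j}}\le C_0\,\delta^{-1}$ uniformly in $k,j$.

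With the uniform entry bound $C_0\,\delta^{-1}$ and bandwidth $N$ in hand, Lemma~\ref{lem:matrix} gives $\norm{\Gamma'}_{\ell^2\to\ell^2}=\norm{(\Gamma')^T}_{\ell^2\to\ell^2}\le(2N-1)C_0\,\delta^{-1}<\infty$, and the same bound holds for the zero-extension $\Gamma$. Thus $P$ possesses a bounded left-inverse, and Lemma~\ref{chartwo} yields that $X$ is a set of sampling for $V_Y(g)$; the lower sampling constant is $\norm{\Gamma}^{-2}$. The upper (Bessel) constant, which is meaningful as soon as $X$ is relatively separated, follows from the embedding $V_Y(g)\hookrightarrow W(C,\ell^2)$ (a consequence of the exponential decay of $g$ and the quasi-uniformity of $Y$), so that point evaluation on a relatively separated set is automatically a bounded operation; this part is routine and parallels the Bessel argument via~\cite{walnut92} already used in Lemma~\ref{l:pre1}.
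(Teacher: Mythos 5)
Your proposal is correct and takes essentially the same route as the paper: the reduction via Lemma~\ref{chartwo}, the construction of the left-inverse by the method of Theorem~\ref{main0} with the points of $X'$ selected inside the $\epsilon$-shrunken intervals, the uniform lower bound on $\det P_k$ by a compactness argument in the space of relative positions, and finally Cramer's rule together with the Schur-type Lemma~\ref{lem:matrix}. One small blemish: your claim that \emph{every} chosen $\xi_i$ lies at distance at least $\epsilon$ from \emph{every} knot is not guaranteed for the $m+n-1$ additional points (they are only confined to the big interval and may come $\epsilon$-close to interior knots), but this is harmless, since the compact set only requires the within-sequence separations $q_{X'}, q_Y$ and the $\epsilon$-margin Schoenberg--Whitney inequalities, and the latter do follow from choosing one point per shrunken interval, exactly as in the paper's inequalities \eqref{eq:Xksample}.
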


\begin{proof}
\textbf{Step 1.} First,
we construct a left-inverse of the pre-Gramian $P$ 
as  in the proof of Theorem~\ref{main0}  with a small modification.

We  consider only  the case $m>1$, $n>1$. For the  construction of the row with
the index $k$ of the left-inverse $\Gamma$,  we choose the size  $N\in
\bN $ for a  square
submatrix  $P_k$  of $P$  as in Step 1 and  the column indices $k_1=
k-r(m+1)+1$ and $k_2= k_1 +N-1$ as in Step 6.

To incorporate condition $C_r(\epsilon )$, we  modify the selection of
the  row indices in Step 2 as follows.
 Assumption $C_r(\epsilon)$ and our definition of $N$ imply that
the interval $I=(y_{k_1+m-1}+\epsilon,y_{k_2-n+1}-\epsilon)$ contains at least
$N$ points $x_j\in X'$, where $X'$ is the quasi-uniform subset of $X$ in
 condition $C_r(\epsilon)$.
Define
\[
   j_1:= \min\{j: x_j\in X',~x_j\ge y_{k_1+m-1}+\epsilon\},
   \quad      j_2:= \max\{j:
   x_j\in X',~x_j\le y_{k_2-n+1}-\epsilon\}\, ,
\]
then the set
\[
    X_k=\{x_j\in X' : j_1\le j\le j_2\}\subset (y_{k_1+m-1}+\epsilon,y_{k_2-n+1}-\epsilon)
\]
has at least $ N$ elements. We now  choose  one point
$x_j\in (y_l+\epsilon,y_{l+1}-\epsilon)\cap X'$ for each
 $k_1+m-1\le l \le k_2-n$
and an additional $n+m-1$ points
$x_j\in (y_{k_1+m-1}+\epsilon,y_{k_2-n+1}-\epsilon)\cap X'$ and obtain
 a subset
\[
   X_k'=\{\xi_1<\cdots<\xi_N\}\subseteq X_k\subset X',
\]
containing precisely $N$ elements.
As before, we set
\[
   \eta_l= y_{k_1+l-1},\qquad 1\le l \le N \, ,
\]
and define  the quadratic  submatrix $P_k$ of $P$ by
\[
   P_k=(g(\xi_j-\eta_l))_{j,l=1,\ldots,N}\, .
\]
The modified  construction leads to a stronger version of the Schoenberg-Whitney
conditions, namely
\begin{equation}\label{eq:Xksample}
  \xi_j+\epsilon \le  \eta_{j+m}\text{ for }1\le j\le N-m,\qquad
  \eta_j+\epsilon \le  \xi_{j+n}\text{ for }1\le j\le N-n \, .
\end{equation}

Step 4 remains unchanged,  and the column of $P$ with $k<k_1$ or
$k>k_2$ are linearly dependent on the columns of $P_k$. 

Hence $P_k$ is invertible
and, by padding the $(r+1)m$-th row   of $P_k^{-1}$
 with zeros, we obtain the  row $(\gamma_{k,j})_{j\in\bZ}$
 with the row index $k$ of the left-inverse $\Gamma$.

\textbf{Step 2. } We  show that this left inverse $\Gamma $
defines a bounded operator on $\ell ^2(\bZ )$.

By construction the $j$-th row $(\gamma _{k,j})$ of $\Gamma $ has at most
$N$ non-zero entries between  $k_1 = k-(r+1)m+1$ and $k_1+N-1$.  To
apply Lemma~\ref{lem:matrix}, we  need to show that the entries of
$\Gamma $ are uniformly bounded, or equivalently,  that  the entries
of  $P_k^{-1}$ are  bounded with a bound that  does not depend on $k$.

We set up  a compactness argument similar to the proof of Theorem~\ref{main0}.

We begin with the simple observation that
\[
    g(\xi_j-\eta_l)= g\big( (\xi_j-\eta_1)-(\eta_l-\eta_1)\big),
    \quad j,l=1,\ldots,N\, .
\]
Let  $S$ be the $N$-dimensional simplex
\begin{equation}
    S = \{\tau=(\tau_1,\ldots,\tau_N) \in \bR ^N :
    ~ 0\le \tau_1 \le \cdots \le \tau_N\le (N-1) Q_Y\}\, .
\end{equation}
Although the  finite sequences $(\xi_j)_{1\le j\le N}$ and
$(\eta_l)_{1\le l\le N}$ depend on the row index $k$ (and we should
write $\xi _j^{(k)}$ and $\eta _l ^{(k)}$ to make the dependence
explicit), we always have
$$
0< \xi _1 - \eta _1 < \xi _N - \eta _1 < \eta _N - \eta _1 \leq
(N-1)Q_Y \, .
$$
Consequently,
\[
    (\xi_1-\eta_1,\ldots,\xi_N-\eta_1)\in S, \,\, \text{ and } \,\,
    (0,\eta_2-\eta_1,\ldots,\eta_N-\eta_1)\in S\, .
\]

Let $q:= \min\{q_{X'},q_Y\}>0$ be the minimum of the separation distances
of the quasi-uniform sets $X'$ and $Y$ and let
\begin{equation}
    S_q = \{\tau=(\tau_1,\ldots,\tau_N)\in S :  \tau_{j+1}-\tau_j \ge
    q\text{ for }1\le j\le N-1\}\, .
    \notag
\end{equation}
Then $S_q$ is  compact and
\[
    (\xi_1-\eta_1,\ldots,\xi_N-\eta_1)\in S_{q} \,\, \text{ and } \,\,
    (0,\eta_2-\eta_1,\ldots,\eta_N-\eta_1)\in S_q\, .
\]
Finally, we define the compact set
\[
   K=\{(\tau,\theta)\in S_q\times S_q : \,   \begin{array}[t]{l}
   \tau_j+\epsilon\le \theta_{j+m}~\text{ for }~1\le j\le N-m,\\[5pt]
   \theta_j+\epsilon\le \tau_{j+n}~\text{ for }~1\le j\le N-n~ \}\,.
   \end{array}
\]
The  assumption $C_r(\epsilon)$ implies that
 \[
    \Big((\xi_1-\eta_1,\ldots,\xi_N-\eta_1),
    (0,\eta_2-\eta_1,\ldots,\eta_N-\eta_1)\Big)\in K\, .
\]
Clearly, the Schoenberg-Whitney conditions are satisfied for every
point  $(\tau,\theta)\in K$ and therefore every  $N\times N$-matrix
$(g(\tau_j-\theta_l))$ has positive determinant. Since the determinant
depends continuously on $(\tau, \theta)$ and $K $ is compact,  we
conclude that
\[
    \inf_{(\tau,\theta)\in K} \det \Big(g(\tau_j-\theta_l)\Big) =
    \delta     >0.
\]
This construction implies that $\det P_k \geq \delta >0 $ for every
$k$.
As in the proof of  Theorem~\ref{main0} we use  Cramer's rule and
conclude  that
 all entries of $P_k^{-1}$
  are bounded by $ (N-1)! \, \delta^{-1} \|g\|_\infty^{N-1}$.

The assumptions of the modified Schur test are satisfied, and
Lemma~\ref{lem:matrix} yields that the  matrix $\Gamma$ is bounded
as an operator on $\ell^2(\bZ)$. Finally,  Lemma~\ref{chartwo}
implies that $X$ is a set of sampling for $V_Y(g)$.
\end{proof}

\begin{cor} \label{finalcor}
Assume that $g$ is totally positive of finite order $M\geq 2$ and
$Y= h \bZ$. Let  $\alpha =\sup _{j\in \bZ } (x_{j+1} - x_j)  $ be the maximum gap
between consecutive sampling points.
If $\alpha < h $, then
$(x_j)$ is a set of sampling for $V_Y(g)$.
\end{cor}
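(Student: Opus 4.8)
The plan is to deduce the corollary directly from Theorem~\ref{sampsi}: since $Y=h\bZ$ is already quasi-uniform with $y_k=hk$ and $q_Y=Q_Y=h$, all that is needed is to produce the data $r\in\bN$, $\epsilon\in(0,h/2)$, and a quasi-uniform subsequence $X'\subseteq X$ witnessing condition $(C_r(\epsilon))$. Under $Y=h\bZ$ the relevant intervals are $(hk+\epsilon,h(k+1)-\epsilon)$ of length $h-2\epsilon$ for part (a), and $(hk+\epsilon,h(k+r)-\epsilon)$ of length $rh-2\epsilon$ for part (b). Once $(C_r(\epsilon))$ is verified, Theorem~\ref{sampsi} immediately gives that $X$ is a set of sampling for $V_Y(g)$.

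The first and main step is to extract a quasi-uniform subsequence $X'$ whose \emph{maximum gap is strictly below} $h$. One cannot simply take $X'=X$, because the hypothesis bounds only the largest gap and says nothing about the separation distance of $X$, which may vanish. Instead I would thin $X$ greedily: fix $\rho:=(h-\alpha)/2>0$, pick any $\xi_0\in X$, and inductively let $\xi_{i+1}$ be the least element of $X$ with $\xi_{i+1}\ge \xi_i+\rho$ (and symmetrically to the left). Then $\xi_{i+1}-\xi_i\ge\rho$, so $q_{X'}\ge\rho>0$; and since the $X$-point immediately preceding $\xi_{i+1}$ lies below $\xi_i+\rho$ yet within distance $\alpha$ of $\xi_{i+1}$ (the maximal gap of $X$ being $\alpha$), one gets $\xi_{i+1}-\xi_i<\rho+\alpha$. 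Hence $X'$ is quasi-uniform with $Q_{X'}\le Q:=\rho+\alpha=(h+\alpha)/2<h$. Because $\alpha<h<\infty$ forces $x_j\to\pm\infty$, the sequence $X'$ is genuinely bi-infinite.

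With $Q<h$ in hand I would choose $\epsilon\in(0,(h-Q)/2)$; note $(h-Q)/2<h/2=q_Y/2$, so the admissibility constraint $\epsilon\in(0,q_Y/2)$ holds. Part (a) is then immediate, since each interval $(hk+\epsilon,h(k+1)-\epsilon)$ has length $h-2\epsilon>Q\ge Q_{X'}$, and an open interval longer than the maximal gap of $X'$ must meet $X'$. For part (b) I would use the elementary estimate that an open interval of length $L$ contains at least $L/Q_{X'}-1\ge L/Q-1$ points of $X'$ (the $p$ interior points cut the interval into $p+1$ pieces, each contained in a gap of $X'$ and so of length $\le Q_{X'}$, whence $L\le(p+1)Q_{X'}$). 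With $L=rh-2\epsilon$ this produces more than $r+1$ points as soon as $r\ge(2Q+2\epsilon)/(h-Q)$, so fixing such an integer $r$ completes $(C_r(\epsilon))$. The decisive feature is the \emph{strict} inequality $Q<h$: it makes the asymptotic density of $X'$ exceed one point per $Y$-cell, so that an interval spanning $r$ cells carries the required surplus point.

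The hard part is exactly this interplay in the extraction: I must remove enough of $X$ to create positive separation (so that $X'$ is admissible), while simultaneously keeping its maximum gap \emph{strictly} below $h$, because it is the strict gap bound, not the mere finiteness of the gap, that drives the counting estimate for part (b). A secondary point worth flagging is that the upper (Bessel) bound implicit in ``set of sampling'' needs $X$ to be relatively separated; this is either a standing assumption on sampling sets or follows from the exponential decay of $g$, whereas the genuine, lower-bound content is precisely what the reduction to $(C_r(\epsilon))$ and Theorem~\ref{sampsi} supplies.
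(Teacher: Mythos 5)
Your reduction is exactly the paper's: Corollary~\ref{finalcor} is deduced from Theorem~\ref{sampsi} by verifying condition $(C_r(\epsilon))$ for $Y=h\bZ$. The paper's entire proof, however, is the single sentence that the assumption is verified with $\epsilon=h-\alpha$, and your write-up supplies what that sentence suppresses, in two respects more carefully than the original. First, you are right that one cannot take $X'=X$: condition $(C_r(\epsilon))$ demands a quasi-uniform subsequence, the hypothesis gives no lower bound on the separation of $X$, and the positive separation $q_{X'}$ is genuinely used in the compactness argument (the set $S_q$) in the proof of Theorem~\ref{sampsi}; your greedy thinning with $\rho=(h-\alpha)/2$, giving $q_{X'}\ge\rho$ and $Q_{X'}\le(h+\alpha)/2<h$, handles this cleanly. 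Second, your choice $\epsilon<(h-Q_{X'})/2$ is admissible, whereas the paper's literal $\epsilon=h-\alpha$ violates the constraint $\epsilon<q_Y/2$ whenever $\alpha\le h/2$, and is in any case too large to yield part (a) (one needs the interval length $h-2\epsilon$ to exceed the maximal gap); the intended choice is clearly of the order $(h-\alpha)/2$, as in your argument. One parenthetical claim of yours is false, though: $\alpha<h<\infty$ does not force $x_j\to\pm\infty$ (take $x_j=\arctan j$, a strictly increasing $\bZ$-indexed sequence with bounded gaps). That $X$ is unbounded in both directions --- and, for the upper sampling bound, relatively separated, as you note --- is an implicit standing hypothesis on sampling sets here, shared by the paper's own formulation; without it the corollary is literally false, so it should be invoked as an assumption rather than derived. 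With that reading, your proof is complete and correct, and in fact more rigorous than the one-line proof in the paper.
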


\begin{proof}
 The assumption of Theorem~\ref{sampsi} is verified
  with $\epsilon  = h - \alpha $.
\end{proof}

\rems\ 1. So far the conclusion of Corollary~\ref{finalcor}  was known only for $B$-splines as
generators~\cite{AG00}. For other generators only qualitative results
were known~\cite{AF98} or weak estimates far from the correct sampling
density~\cite{AF98,AG01,Li07}.

2. If $Y= h \bZ$ and $X$ satisfies condition $C_r(\epsilon )$,
then the largest possible gap of consecutive points in $X$ is
$2 h  - 2\epsilon $. Of course, large gaps have to be
compensated by a higher density of neighboring points so that
the condition $|(y_k+\epsilon,y_{k+r}-\epsilon)\cap X'|\ge r+1$ is
still satisfied. By refining the compactness argument in the proof of
Theorem~\ref{sampsi},  it is possible to derive even weaker conditions
for $X$ to be a set of sampling for a shift-invariant space $V_Y(g)$
with totally positive generator $g$.









\def\cprime{$'$} \def\cprime{$'$} \def\cprime{$'$} \def\cprime{$'$}
  \def\cprime{$'$}

\end{document}